\def\d{\mathrm{d}}
\def\X{\mathcal{X}}
\def \Y{\mathcal{Y}} 
\def\M{\mathcal{M}} 
\def\MM{\mathscr{M}} 
\def\Z{\mathbb{Z}_+}
\def\r{r} 
\def\N{\mathbb{N}}
\def \R{\mathbb{R}}
\def \E{\mathbb{E}}
\def\F{\mathcal{F}}
\def\L{\mathcal{L}} 
\def\Gfv{\mathbb{A}} 
\def\Gdw{\mathbb{B}} 
\def\Gwf{\mathcal{A}_{K}} 
\def\Gcir{\mathcal{B}} 
\def\Gn{B} 
\def\A{\mathcal{A}}
\def\Fdir{\mathscr{F}_{\Pi}}
\def\Fgamma{\mathscr{F}_{\Gamma}}
\def\DK{\Delta_{K}}
\def\ci{\perp\!\!\!\perp}
\def \la {\langle}
\def \ra {\rangle}
\newcommand{\bs}[1]{\boldsymbol{#1}}
\newcommand{\norm}[1]{|{#1}|}
\def\na{\theta}
\def \aa {\bs\alpha}
\def \yy {\mathbf{y}}
\def \ee {\mathbf{e}}
\def \mm {\mathbf{m}}
\def \nn {\mathbf{n}}
\def \ii {\mathbf{i}}
\def \oo {\mathbf{0}}
\def \xx {\mathbf{x}}
\def \zz {\mathbf{z}}
\def \XX {\mathbf{X}}
\def \ZZ {\mathbf{Z}}
\def \ss {s} 
\def \SS {S} 
\def\hw{\widehat{w}}
\newtheorem{theorem1}{Theorem}[section]
\newenvironment{theorem}{\begin{theorem1}}{\end{theorem1}}
\newtheorem{proposition}[theorem1]{Proposition}
\newtheorem{definition1}[theorem1]{Definition}
\newtheorem{remark1}[theorem1]{Remark}
\newtheorem{example1}[theorem1]{Example}
\newtheorem{lemma1}[theorem1]{Lemma}
\newenvironment{lemma}{\begin{lemma1}}{\end{lemma1}}
\newcounter{algorithm}
\renewcommand{\thealgorithm}{\Alph{algorithm}}
\newcommand{\todo}[1]{ \ifthenelse{ \equal{\draft}{true} }{{\color{red} #1 }}{}}
\newcommand{\draft}{true}
\long\def\symbolfootnote[#1]#2{\begingroup\def\thefootnote{\hspace*{-1mm}\fnsymbol{footnote}}\footnote[#1]{#2}\endgroup}
\title{\bf \vspace{-2.5cm}
Filtering hidden Markov measures \\[5mm]
}
\author{
\normalsize\textsc{Omiros Papaspiliopoulos}\\[1mm]
\normalsize\emph{ICREA and Universitat Pompeu Fabra}\\[2mm]
\normalsize\textsc{Matteo Ruggiero
}\\[1mm]
\normalsize\emph{University of Torino and Collegio Carlo Alberto}\\[2mm]
\normalsize\textsc{Dario Span\`o}\\[1mm]
\normalsize\emph{University of Warwick}
}
\date{\today}
\begin{document}

\maketitle
\thispagestyle{empty}

\vspace{-5mm}
\begin{center}
\begin{minipage}{.75\textwidth}
\footnotesize\noindent
We consider the problem of learning two families of time-evolving
random measures from
indirect observations. In the first model, the signal
is  a Fleming--Viot diffusion, which is reversible with respect to the law of a Dirichlet process, and the data is a sequence of random samples
from the state at discrete times. In the second model, the
signal is a  Dawson--Watanabe diffusion, which is reversible with
respect to the law of a gamma random measure, and the data is a sequence of Poisson point
configurations whose intensity is given by the state at discrete
times. A common methodology is developed to obtain the filtering
distributions  in a computable
form, which is based on the projective
properties of the signals and duality properties of their
projections. The filtering distributions take the form of mixtures of
Dirichlet processes and gamma random measures for each of the two
families respectively, and an explicit algorithm is provided to
compute the parameters of the mixtures. Hence, our results extend
classic characterisations of the posterior distribution under
Dirichlet process and gamma random measures priors to
a dynamic framework. \\[-2mm]

\textbf{Keywords}:
Optimal filtering,
Bayesian nonparametrics,
Dawson--Watanabe process,
Dirichlet process,
duality,
Fleming--Viot process,
gamma random measure. \\[-2mm]

\textbf{MSC} Primary:
62M05, 
62M20. 
Secondary: 
62G05,  	
60J60, 
60G57. 
\end{minipage}
\end{center}

\linespread{1.2}


\section{Introduction}

\subsection{Hidden Markov measures}

A hidden Markov model (HMM) is a  sequence
$\{(X_{t_{n}},Y_{t_{n}}),n\ge1\}$, with the following ingredients:
$X_{t_{n}}$ is an unobserved Markov chain, called \emph{latent signal}
and assumed here to be the discrete time sampling of a continuous time
Markov process; the $Y_{t_{n}}$'s are conditionally independent
observations given the signal, with law given by the \emph{emission
  distribution}  $\L(Y_{t_{n}}| X_{t_{n}})$, parametrised by the
current signal state. Filtering optimally an HMM entails the sequential exact evaluation of the \emph{filtering distributions} $\L(X_{t_{n}}|
Y_{1:n})$, that is the conditional distributions of
the signal given the past and current observations $Y_{1:n}:=(Y_{t_{1}},\ldots,Y_{t_{n}})$. Optimal filtering thus extends the Bayesian approach to a dynamic framework. The evaluation of the filtering  distributions is the key for the solution of several statistical problems in this setting, such as the prediction of future observations, the derivation of smoothing distributions and the calculation of likelihood functions. See \cite{CMR05} for a book-length treatment of the recursions involved in such computations and their dependence on the filtering distributions.

The literature on HMMs has largely focussed on parametric signals,
where the unobserved Markov process is finite dimensional.
Recently,  authors have
considered infinite dimensional HMMs, which involve the dynamics of
infinitely many parameters.
 One strand of work originates with
\cite{BGR02}, who model the
signal as a Markov chain with countable state space and transitions
based on a hierarchy of Dirichlet processes. See also \cite{VSTG08},
\cite{SGGL09} and \cite{ZZZ14} for further developments.  A different
strand of work tries to build time-evolving Dirichlet processes for
semi-parametric time-series analysis; see for example \cite{GS06}, \cite{RT08} and \cite{MR15}, which all build upon the celebrated stick breaking representation of the Dirichlet process \citep{S94}. Finally, yet another class of
infinite dimensional HMMs takes
the signal as a Markov chain with finite state space but uses an
infinite number of parameters for the emission distribution, see
\cite{YPRH11}. A common feature of the above mentioned contributions is that they all resort to Monte Carlo strategies for posterior computation.

In this paper we study models for time-evolving
measures and derive their posterior distributions analytically. We consider two families of models that give rise to
infinite dimensional or measure-valued hidden Markov models, and term
these families \emph{hidden Markov measures}. In the first family we consider two models. The simpler of the two assumes that the
signal at each time point is a probability distribution on a countable
set, with infinitely many parameters. The signal evolves in continuous time according an infinite Wright--Fisher diffusion, and the data are available in discrete times and are realisations from the distribution given by the signal. In the more general model, the signal at each time point is a discrete distribution on a Polish space, evolving as a
Fleming--Viot diffusion, and the data are obtained in discrete times as
draws from the underlying measure. The former model admits a likelihood and can be dealt with by means of direct methods, whereas the latter features an evolving support for the signal states and needs to be manipulated indirectly, hence they are treated separately. 
 In the second family the signal at each time point is a positive almost surely discrete measure on a Polish space, the signal evolves in continuous time according to the Dawson--Watanabe diffusion, and the data are available in discrete times as realisations from a doubly stochastic Poisson process with intensity given by the signal.

In our specification, the Fleming--Viot and Dawson--Watanabe processes are stationary with respect to the laws of Dirichlet and gamma random measures respectively. Additionally, the processes can be defined so that one
parameter controls the correlation structure and other parameters
determine the invariant distribution. Therefore, our models are
natural \emph{dynamic} extensions of infinite dimensional
\emph{static} models for unknown distributions and intensities that
are widely used in Bayesian statistics and machine learning for a
broad range of applications. A fundamental reason for the popularity
of the static models are \emph{conjugacy} properties that make the
Bayesian updating tractable.

In this paper we demonstrate that Bayesian learning is tractable for
the families of hidden Markov measures we consider. We show that the
filtering distributions evolve within finite mixtures of Dirichlet and gamma
random measures, and we provide a recursive algorithm for the
computation of the parameters of these mixtures. Broadly speaking, our
theory builds upon a synthesis of three classes of background
results. The first is the connection between
filtering and the so-called dual process, which was recently
established in \cite{PR14} and is reviewed in Section \ref{sec: filtering and duality}. This previous work identifies classes of parametric HMMs for which the filtering distributions evolve in finite
mixtures of finite dimensional distributions and provides a recursive
algorithm for the associated parameters. The second class of results is concerned with the projective properties of the Fleming--Viot and Dawson--Watanabe processes that
link them to the finite dimensional Wright--Fisher and Cox--Ingersoll--Ross processes;
the details are discussed in Sections \ref{subsec: FV-model} and
\ref{subsec: DW-model} respectively. The third relates to the conjugacy properties of the
corresponding  static models and mixtures thereof, discussed in
Sections \ref{subsec: FV-static} and \ref{subsec: DW-static}. Our
strategy exploits the fact that the finite dimensional projected
models admit computable filters, due to the results in \cite{PR14},
and that the exchange of the operations ``projection'' and ``filter''
is valid, which we prove in this paper. Figure \ref{scheme} depicts the strategy for obtaining our results. Given the signal distribution $\L(X_{t_{n}}\mid Y_{1:n})$, its time propagation $\L(X_{t_{n+k}}\mid Y_{1:n})$ is found by propagating in time the projection of the former onto an arbitrary partition, that is $\L(X_{t_{n}}(A_{1},\ldots,A_{K})\mid Y_{1:n})$, and by exploiting the projective characterisation of the filtering distributions. This is done in Theorems \ref{prop: FV propagation} and \ref{prop: DW propagation} for the Fleming--Viot and Dawson--Watanabe case respectively, while the same result is obtained in Theorem \ref{prop: PR14-WF-infty} for the infinite alleles model by  exploiting directly the duality properties.
In addition, a duality result and the time propagation are provided in Theorem \ref{multi CIR duality} and Proposition \ref{K-CIR implications} for a class of multivariate Cox--Ingersoll--Ross processes.

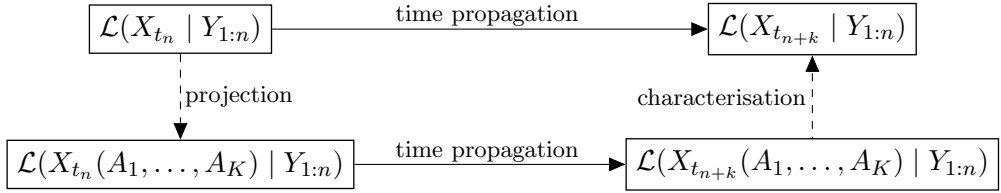
\begin{figure}[t!]
\begin{center}
\begin{tikzpicture}[>=triangle 45,scale=.6]
\node (fv1) at (-2,3) [rectangle,draw,line width=.5pt]
	{$\L(X_{t_{n}}\mid Y_{1:n})$};
\node (fv2) at (12,3) [rectangle,draw,line width=.5pt]
	{$\L(X_{t_{n+k}}\mid Y_{1:n})$};
\node (wf1) at (-2,0) [rectangle,draw,line width=.5pt]
	{$\L(X_{t_{n}}(A_{1},\ldots,A_{K})\mid Y_{1:n})$};
\node (wf2) at (12,0) [rectangle,draw,line width=.5pt]
	{$\L(X_{t_{n+k}}(A_{1},\ldots,A_{K})\mid Y_{1:n})$};
\draw[->] (fv1) -- (fv2);
\node at (4.8,3.3) {\footnotesize\text{time propagation}};
\draw[->,dashed] (fv1) -- (wf1);
\node at (-0.7,1.5) {\footnotesize\text{projection}};
\draw[->] (wf1) -- (wf2);
\draw[<-,dashed] (fv2) -- (wf2);
\node at (10,1.5) {\footnotesize\text{characterisation}};
\node at (4.8,0.3) {\footnotesize\text{time propagation}};
\end{tikzpicture}
\begin{minipage}{.75\textwidth}\vspace{4mm}
\caption{\footnotesize Given the same amount of data, the time propagation of the filtering distribution $\L(X_{t_{n}}\mid Y_{1:n})$ is found by propagating its projection onto an arbitrary partition $(A_{1},\ldots,A_{K})$, and by exploiting the projective characterisation of the filtering distributions.}\label{scheme}
\end{minipage}
\end{center}
\end{figure}


\subsection{Notations}

Throughout the paper, $\Y$ will denote a locally compact Polish space, $\MM(\Y)$ the space of finite Borel measures on $\Y$, $\MM_{1}(\Y)$ its subspace of probability measures. A typical element of $\MM(\Y)$ will be
\begin{equation}\label{alpha measure}
\alpha\in\MM(\Y), \quad
\alpha=\theta P_{0}, \quad
\theta>0,\quad
P_{0}\in\MM_{1}(\Y),
\end{equation}
where $\theta$ is the total mass of $\alpha$. The discrete measures $x(\cdot)\in\MM_{1}(\Y)$ and $z(\cdot)\in\MM(\Y)$ will denote the marginal states of the signals $X_{t}$ and $Z_{t}$, with $X_{t}(A)$ and $Z_{t}(A)$ being the respective one dimensional projections onto the Borel set $A\subset\Y$. We will also adopt
boldface notation to denote vectors, with the following conventions:
\begin{equation*}\label{vector notation}
\begin{split}
\xx=&\,(x_{1},\ldots,x_{K})\in\R_{+}^{K}, \quad
\mm=\,(m_{1},\ldots,m_{K})\in \Z^{K}, \quad\ \
\xx^{\mm}=x_{1}^{m_{1}}\cdots x_{K}^{m_{K}},\quad
|\xx|=\sum_{i=1}^{K}x_{i},
\end{split}
\end{equation*}
where the dimension $2\le K\le \infty$ will be clear from the context if not specified explicitly. Typically, $\xx$ will represent a finite dimensional signal state and $\mm$ a vector of multiplicities.


\section{Computable filtering and duality}\label{sec: filtering and duality}

Computable filtering refers to the circumstance where the filtering distributions can be characterised
by a finite number of parameters whose computation can be achieved at
cost that grows at most polynomially with the number of
observations. Special cases of this framework are finite dimensional
filters for which the computational cost is linear in the number of
observations, the Kalman filter for linear Gaussian HMMs being the
celebrated model in this setting.

\cite{PR14} recently developed a framework for the computable
filtering of finite dimensional HMMs.
A brief description of the framework is as follows. There is a finite
dimensional HMM $\{(X_{t_{n}},Y_{t_{n}}),n\ge0\}$, where $X_{t}$ has
state space $\X$,
stationary distribution $\pi$, transition kernel $P_t(x,\d x')$, and the data are linked to the signal
via an emission density $f_{x}(y)$. The filtering distributions are
$\nu_n:=\L(X_{t_n} | Y_{1:n})$ and $\nu$ is
the prior distribution for $X_{t_0}$.
The exact or optimal filter is the solution of the recursion
\begin{equation*}
\nu_0 = \phi_{Y_{t_{0}}} (\nu)\,,\quad \nu_{n} = \phi_{Y_{t_{n}}}(\psi_{ t_n - t_{n-1}}(\nu_{n-1})),\quad\,n\in\N,
\end{equation*}
which involves the following two operators acting on measures: the \emph{update operator}
\begin{equation}\label{update operator}
\phi_{y}(\nu)(\d x)
=\frac{f_x(y) \nu(\d x)}{p_\nu(y)}, \quad \quad p_\nu(y) = \int_{\X} f_x(y)
\nu(\d x)\,,
\end{equation}
 and the \emph{prediction operator}
\begin{equation}\label{prediction operator}
\psi_t(\nu)(\d x')
=\int_{\X}\nu(\d x)P_t(x,\d x').
\end{equation}
The existence of a computable filter and a recursive algorithm can be
established if the following structure is embedded in the HMM:
\begin{itemize}
\item \emph{Conjugacy}: there exists a function $h(x,\mm,\theta)\geq 0$,
  where $x \in \X$,  $\mm \in \Z^{K}$ for some
  $K\in\N$, and $\theta \in \R^l$ for some $l\in\N$, and functions $t(y,\mm)$
  and $T(y,\theta)$ such that $\int h(x,\mm,\theta) \pi(dx)  =1$, for all $\mm$ and $\theta$, and
\[
\phi_y (h(x,\mm,\theta) \pi(dx) ) = h(x,t(y,\mm),T(y,\theta))\pi(dx).
\]
\item \emph{Duality}: there exists a two-component Markov
process $(M_t,\Theta_t)$ with state-space $\Z^{K} \times \R^l$, and
generator,
 \begin{equation*}
\label{eq:gen-dual}
  (A g)(\mm,\theta) = \lambda(|\mm|) \rho(\theta) \sum_{i=1}^K m_i
   [g(\mm-\ee_i,\theta)-g(\mm,\theta)] + \sum_{i=1}^{l}\r_{i}(\theta)   \frac{\partial g(\mm,\theta)}{\partial \theta}\,,
\end{equation*}
such that it is \emph{dual} to $X_t$ with respect to the function $h$,
i.e., it satisfies
\begin{equation}
  \label{duality identity}
  \E^x[h(X_t,\mm,\theta)] = \E^{(\mm,\theta)}[h(x,M_t,\Theta_t)],  \quad
  \forall x \in \X, \mm \in \Z^K, \theta \in \R^l, t\geq 0\,.
\end{equation}
\end{itemize}
Note that the $\Theta$  component of the dual process is assumed to
evolve autonomously, according to a system of ordinary differential
equations, and modulates the rates of $M_t$, which is a death
process on a $K$-dimensional lattice. Under these conditions,
Proposition 2.3 of \cite{PR14} shows that for $\F=\{h(x,\mm,\theta)
\pi(\d x),\, \mm \in \Z^K, \theta \in \R^l\}$, if $\nu \in \F$, then
$\nu_n$ is a finite mixture of distributions in $\F$ with parameters
that can be computed recursively. Additionally, the local
sufficient condition for duality
\begin{equation}\label{local duality}
(\A h(\cdot,\mm,\theta))(x)
=(A h(x,\cdot,\cdot))(\mm,\theta), \quad \quad \forall x\in \X, \mm
\in \Z^K, \theta \in \R^l,
\end{equation}
where $\A$ denotes the generator of $X_t$, is applied to identify dual processes and computable filters when the signal is the Cox--Ingersoll--Ross or the $K$-dimensional Wright--Fisher diffusion.
The work of \cite{PR14} includes as special cases computable filters
obtained previously in \cite{GK04} and \cite{CG06,CG09}. However, it is strictly
applicable to finite dimensional signals, due to the assumptions that
an emission density and a finite dimensional dual exist.



\section{Filtering Fleming--Viot processes}\label{sec: FV}

\subsection{The static model: Dirichlet process}\label{subsec: FV-static}

The Dirichlet process, introduced by \cite{F73} and commonly
recognised as the cornerstone in Bayesian nonparametrics (see \cite{G10} for a recent review), is a
discrete random probability measure $x\in\MM_{1}(\Y)$ that can be
thought to describe the frequencies in a population with
infinitely many labelled types, whereby $\Y$ is often referred to as the
 \emph{type space}.
The process admits the series representation
\begin{equation}\label{DP series representation}
x(\cdot)=\sum_{i=1}^{\infty}W_{i}\delta_{Y_{i}}(\cdot), \quad
W_{i}=\frac{Q_{i}}{\sum_{j\ge1}Q_{j}},\quad
Y_{i}\overset{iid}{\sim}P_{0},\quad (Y_i)\ci (W_i).
\end{equation}
Here $\{Q_{i},i\ge1\}$ are the jumps of a gamma process
with mean measure $\na y^{-1}e^{-y}\d y$. 
We will denote by $\Pi_{\alpha}$, $\alpha=\theta P_{0}$, the law of  $x(\cdot)$ in \eqref{DP series representation}.
The Dirichlet process has two fundamental properties that are of great
interest in statistical learning:
\begin{itemize}
\item \emph{Conjugacy}: $y_{i}\mid x\overset{iid}{\sim}x$ and $x\sim\Pi_{\alpha}$ imply $x\mid \yy_{1:m}\sim
\Pi_{\alpha+\sum_{i=1}^{m}\delta_{y_{i}}}$, where $\yy_{1:m}:=(y_{1},\ldots,y_{m})$.
\item \emph{Projection}: for a measurable partition $A_{1},\ldots,A_{K}$ of
  $\Y$, the vector $(x(A_{1}),\ldots,x(A_{K}))$ has the Dirichlet
  distribution with parameter
  $\aa=(\alpha(A_{1}),\ldots,\alpha(A_{K}))$, henceforth denoted
  $\pi_{\aa}$.
\end{itemize}
When $\Y=\N$, the Dirichlet process can be seen as a Dirichlet distribution with infinitely many types defined on
\begin{equation}\label{infinite simplex}
\Delta_{\infty}=\left\{x\in[0,1]^{\infty}:\ \sum_{i=1}^{\infty}x_{i}= 1\right\}.
\end{equation}
This corresponds to the construction
\begin{equation}\label{infinite-dimensional dirichlet}
V_{i}=\frac{T_{i}}{\sum_{j\ge1}T_{j}}, \quad \quad
T_{j}\overset{ind}{\sim}\text{Ga}(\alpha_{j},1),
\end{equation}
where
\begin{equation}\label{inf-alpha constraints}
\aa=(\alpha_{1},\alpha_{2},\ldots), \quad \quad \alpha_{i}=\theta P_{0}(\{i\}), \quad i\ge1.
\end{equation}
See \cite{EG93}, equation (1.26) and Lemma 2.2. 
With a slight abuse of notation, we will denote the law of $(V_{1},V_{2},\ldots)$ by the same symbol $\pi_{\aa}$ used for the Dirichlet distribution; which distribution $\pi_{\aa}$ refers to will be clear from the context. 

Mixtures of Dirichlet processes were introduced in \cite{A74}. They add  a further level to the Bayesian hierarchical model, whereby
\begin{equation*}
y_{i}\mid x,u\overset{iid}{\sim}x,\quad \quad x\mid u\sim\Pi_{\alpha_{u}},\quad \quad
u\sim H,
\end{equation*}
where $\alpha_{u}$ denotes the measure $\alpha$ conditionally on $u$. Equivalently,
\begin{equation*}
y_{i}\mid x\overset{iid}{\sim}x,\quad \quad
x\sim \int_{\mathcal{U}}\Pi_{\alpha_{u}}\d H(u).
\end{equation*}
For mixtures of Dirichlet processes the conjugacy and projective properties read as follows:
\begin{itemize}
\item \emph{Conjugacy}:
$y_{i}\mid x\overset{iid}{\sim}x$ and $x\sim \int_{\mathcal{U}}\Pi_{\alpha_{u}}\d H(u)$
imply
\begin{equation*}
x\mid \yy_{1:m}
\sim \int_{\mathcal{U}}\Pi_{\alpha_{u}+\sum_{i=1}^{m}\delta_{y_{i}}}\d H_{\yy_{1:m}}(u),
\end{equation*}
where $H_{\yy_{1:m}}$ is the conditional distribution of $u$
given $\yy_{1:m}$.

\item \emph{Projection}: for a measurable partition $A_{1},\ldots,A_{K}$ of
  $\Y$, we have
  \begin{equation*}
(x(A_{1}),\ldots,x(A_{K}))
\sim
\int_{\mathcal{U}}\pi_{\aa_{u}}\d H(u),
\end{equation*}
where $\aa_{u}=(\alpha_{u}(A_{1}),\ldots,\alpha_{u}(A_{K}))$.
\end{itemize}


\subsection{The signal: Fleming--Viot process}\label{subsec: FV-model}

Fleming--Viot (FV) processes are a family of diffusions taking values
in the subspace of $\MM_{1}(\Y)$ given by purely atomic probability
measures, hence they describe evolving discrete distributions.  See \cite{EK93} and \cite{D93} for exhaustive reviews. Here we restrict the attention to a subclass known as the (labelled) \emph{infinitely many neutral alleles model} with parent independent mutation, henceforth for simplicity called the FV process.
This is characterised by the generator
\begin{align}\label{FV-generator}
\Gfv \varphi(x)
=&\,\frac{1}{2}\int_{\Y}\int_{\Y}x(\d y)(\delta_{y}(\d u)-x(\d u))\frac{\partial^{2}\varphi(x)}{\partial x(\d y)\partial x(\d u)}
+\int_{\Y}x(\d y)B\bigg(\frac{\partial\varphi(x)}{\partial x(\cdot)}\bigg)(y)
\end{align}
where $\varphi$ is a test function, $\delta_{y}$ is a point mass at $y$ and $\partial \varphi(x)/\partial x(\d
y)=\lim_{\varepsilon\rightarrow0^{+}}\varepsilon^{-1}(\varphi(x+\varepsilon
\delta_{y})-\varphi(x))$. Furthermore, $B$ is the  \emph{mutation operator},
that is the generator of a Feller pure-jump process on $\Y$
\begin{equation}\label{mutation operator}
(Bf)(y)=\frac{\na}{2}\int_{\Y}[f(u)-f(y)]P_{0}(\d u),\quad \quad f\in C(\Y),
\end{equation}
whereby at rate $\na/2$ jumps occur to a location sampled from
$P_{0}\in\MM_{1}(\Y)$ independently of the current value of the jump
process, whence mutations are \emph{parent independent}. The domain of $\Gfv$ is taken to be the set of $\varphi\in C(\MM_{1}(\Y))$ of the form $\varphi(x)=F(\la f_{1},x\ra,\ldots,\la f_{k},x\ra)$, where $f_{i}\in C(\Y)$ and $F\in C^{2}(\R^{k})$.
This FV process is known to be stationary and reversible with respect to the law $\Pi_{\alpha}$ of a Dirichlet process as in \eqref{DP series representation}; see \cite{EK93}, Section 8.

Projecting a FV process $X_{t}$ onto a measurable partition $A_{1},\ldots,A_{K}$ of $\Y$ yields a $K$-dimensional Wright--Fisher (WF) diffusion, which is reversible and stationary with respect to the Dirichlet distribution $\pi_{\aa}$, for $\alpha_{i}=\theta P_{0}(A_{i})$, $i=1,\ldots,K$, and has infinitesimal operator,
for $x_{i}=x(A_{i})$,
\begin{equation}\label{eq:WF-generator}
\Gwf f(\xx)=
\frac{1}{2}\sum_{i,j=1}^{K}x_{i}(\delta_{ij}-x_{j})\frac{\partial^{2}f(\xx)}{\partial x_{i}\partial x_{j}}
+\frac{1}{2}\sum_{i=1}^{K}(\alpha_{i}-\na x_{i})\frac{\partial f(\xx)}{\partial x_{i}}.
\end{equation}
Here $\delta_{ij}$ denotes Kronecker delta and $\Gwf$ acts on
$C^{2}(\DK)$ functions. See \cite{D10}. This property is the dynamic
counterpart of the projective property of Dirichlet processes
discussed earlier. The same result is obtained when the mutant type distribution $P_{0}$ is finitely supported, for example when $\Y=\{1,\ldots,K\}$.


When $K$ goes to infinity in \eqref{eq:WF-generator}, or alternatively when $\Y=\N$ in \eqref{FV-generator}, the signal follows a diffusion characterised in \cite{E81}, with generator
\begin{equation}\label{eq:ethier-generator}
\A_{\infty}f(\xx)=
\frac{1}{2}\sum_{i,j=1}^{\infty}x_{i}(\delta_{ij}-x_{j})\frac{\partial^{2} f(\xx)}{\partial x_{i}\partial x_{j}}
-\frac{1}{2}\sum_{i=1}^{\infty}(\alpha_{i}-\theta x_{i})\frac{\partial f(\xx)}{\partial x_{i}},
\end{equation}
where $x_{i}=X(\{i\})$ and $(\alpha_{1},\alpha_{2},\ldots)$ is as in \eqref{inf-alpha constraints}.
Here $\A_{\infty}$ acts on functions $C^{2}(\overline\Delta_{\infty})$
which depend on finitely many coordinates,
$\overline\Delta_{\infty}$ denoting the closure of \eqref{infinite simplex} in the product topology,
and the associated process is reversible with respect to the law $\pi_{\aa}$ of
\eqref{infinite-dimensional dirichlet}. See also \cite{EG93} and
\cite{EK93}. In this paper will refer to this process as the
\emph{infinite WF diffusion}, given its structural similarity with \eqref{eq:WF-generator}.

For statistical modelling it is useful to introduce a further
parameter $\sigma$ that controls the speed of the process. This can
be done by defining $\Gfv'=\sigma\Gfv$, $\Gwf'=\sigma\Gwf$ and $\A_{\infty}'=\sigma\A_{\infty}$,
which correspond to the time change $X_{\tau(t)}$ with
$\tau(t)=\sigma t$.  In such parameterisation,
$\sigma$ does not affect the stationary distribution of the process,
and can be used to model the dependence structure. For simplicity of
exposition, the theory below focuses on the case $\sigma=1$.


\subsection{Filtering infinite Wright--Fisher diffusions}\label{subsec: FV-inf-alleles}

We consider a model for evolving distributions with countable
support. In particular,
suppose the signal $\XX_{t}$ follows an infinite WF diffusion with generator \eqref{eq:ethier-generator}, and assume that, given $\XX_{t}=\xx\in\Delta_{\infty}$, the emission distribution is multinomial with density
\begin{equation}\label{extended MN}
\text{MN}(\mm;|\mm|,\xx)={|\mm|\choose {\mm}}\prod_{j\ge1}x_j^{m_j}, \quad \quad |\mm|<\infty.
\end{equation}
Note that the above probability mass function is well defined for $\mm\in\Z^{\infty}$ such that $|\mm|<\infty$.
The following Theorem provides the prediction step of the
filtering algorithm, extending a result obtained in \cite{PR14} for finite $K$.

\begin{theorem}\label{prop: PR14-WF-infty}
Let $\XX_{t}$ have generator \eqref{eq:ethier-generator} and $\pi_{\aa}$ be the law of \eqref{infinite-dimensional dirichlet}. Then, for any $\mm\in\Z^{\infty}$ such that $|\mm|<\infty$,
\begin{equation}\label{nu-m propagation inf}
\psi_{t}\big(\pi_{\aa+\mm}\big)
=\sum_{\substack{\oo \le \ii\le \mm}}p_{\mm,\mm-\ii}(t)\pi_{\aa+\mm-\ii},
\end{equation}
where
\begin{equation}\label{transition probabilities}
\begin{split}
p_{\mm,\mm}(t)=&\,e^{-\lambda_{|\mm|}t},\\
p_{\mm,\mm-\ii }(t)=&\,
C_{|\mm| ,|\mm| -|\ii|}(t) \Bigg(\prod_{h=0}^{|\ii|-1}\lambda_{|\mm| -h}\Bigg)p(\ii;\,\mm,|\ii|),
\quad  \oo< \ii\le \mm,\\
C_{|\mm| ,|\mm| -|\ii|}(t)=&\,
(-1)^{|\ii|}\sum_{k=0}^{|\ii|}\frac{e^{-\lambda_{|\mm| -k}t}}{\prod_{0\le h\le |\ii|,h\ne k}(\lambda_{|\mm| -k}-\lambda_{|\mm| -h})},
\end{split}
\end{equation}
and where
\begin{equation}\label{hypergeometric}
p(\ii;\,\mm,|\ii|)=\binom{|\mm| }{|\ii|}^{-1}
\prod_{j\ge1}\binom{m_{j}}{i_{j}}
\end{equation}
is the multivariate hypergeometric probability function, with parameters $(\mm,|\ii|)$, evaluated at $\ii$.
\end{theorem}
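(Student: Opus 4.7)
The plan is to combine the reversibility of the infinite WF diffusion with respect to $\pi_{\aa}$ with a duality argument analogous to the one used in \cite{PR14} for the finite dimensional case, extended to the infinite setting by exploiting the fact that the dual process starting from any $\mm$ with $|\mm|<\infty$ stays in the subset of $\Z^\infty$ of vectors with finite total mass.

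First, I would identify the dual process $\MM_t$ of $\XX_t$ with respect to the function
\[
h(\xx,\mm)\,\pi_{\aa}(d\xx)=\pi_{\aa+\mm}(d\xx),
\]
where $h(\xx,\mm)$ is (up to the usual normalising ratio of gamma functions) the monomial $\xx^{\mm}$, well defined because $\mm$ has finite support. The candidate dual $\MM_t$ is the $\Z^\infty$-valued pure death process with total jump rate $\lambda_{|\mm|}=|\mm|(|\mm|-1+\theta)/2$ and, conditional on a jump, decrement of the $j$-th coordinate with probability $m_j/|\mm|$. To verify duality, I would apply the local sufficient condition \eqref{local duality} with $\A_{\infty}$ of \eqref{eq:ethier-generator}: a direct generator computation on the cylinder function $\xx\mapsto h(\xx,\mm)$ shows that the drift and diffusion terms of $\A_\infty$ reproduce precisely the death rates of $A$ acting on $\mm$, exactly as in the finite dimensional WF case treated in \cite{PR14}. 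Since $h(\cdot,\mm)$ depends on only finitely many coordinates of $\xx$, this reduces to the same calculation as in the $K$-dimensional case with $K$ large enough to contain $\mathrm{supp}(\mm)$, and the duality identity \eqref{duality identity} follows.

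Next I would derive \eqref{nu-m propagation inf} from duality and reversibility. Since $\pi_{\aa}$ is the reversible law of $\XX_t$,
\[
\psi_t(\pi_{\aa+\mm})(d\xx')=\int P_t(\xx,d\xx')\,h(\xx,\mm)\pi_{\aa}(d\xx)=\pi_{\aa}(d\xx')\,\E^{\xx'}\!\left[h(\XX_t,\mm)\right],
\]
and the duality identity replaces the last expectation by $\E^{\mm}[h(\xx',\MM_t)]$. Decomposing this expectation over the states $\mm-\ii$ reachable by the death process from $\mm$ in time $t$ gives
\[
\psi_t(\pi_{\aa+\mm})(d\xx')=\pi_{\aa}(d\xx')\sum_{\oo\le\ii\le\mm}p_{\mm,\mm-\ii}(t)\,h(\xx',\mm-\ii),
\]
and reabsorbing $h(\xx',\mm-\ii)\pi_{\aa}(d\xx')=\pi_{\aa+\mm-\ii}(d\xx')$ yields the claimed mixture.

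Finally I would identify the coefficients $p_{\mm,\mm-\ii}(t)$ in the closed form \eqref{transition probabilities}--\eqref{hypergeometric}. Because the total rate $\lambda_{|\mm|}$ depends only on $|\mm|$, the chain $|\MM_t|$ is an autonomous pure death process on $\Z$ with rates $\lambda_n$; its transition probabilities from $|\mm|$ to $|\mm|-|\ii|$ can be written explicitly by diagonalising the corresponding tridiagonal generator, producing the exponential series giving $C_{|\mm|,|\mm|-|\ii|}(t)$ and the product of rates. Conditional on having made $|\ii|$ decrements from $\mm$, exchangeability of the sequential sampling without replacement (at each death, pick coordinate $j$ with probability proportional to the current count) makes the joint distribution of which coordinates were decremented a multivariate hypergeometric, which is precisely \eqref{hypergeometric}. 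The main technical step to be careful with is the verification of duality at infinite dimension, but as noted it reduces to the finite dimensional calculation of \cite{PR14} because $h(\cdot,\mm)$ and hence both sides of \eqref{local duality} depend only on the coordinates in $\mathrm{supp}(\mm)$.
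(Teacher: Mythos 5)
Your proposal is correct and follows essentially the same route as the paper: identify the death-process dual via the local condition \eqref{local duality} applied to the normalised monomial $h(\xx,\mm)$, use reversibility of $\XX_t$ with respect to $\pi_{\aa}$ to turn the prediction operator into a dual expectation, and then expand that expectation using the transition probabilities of the death process (total-mass jump chain times a multivariate hypergeometric allocation). The only cosmetic difference is that the paper carries out the generator computation $\A_{\infty}h(\xx,\mm)$ explicitly and defers the transition probabilities to its Appendix lemma (itself reducing to Proposition 2.1 of \cite{PR14}), whereas you argue both by reduction to the finite-dimensional case, which is legitimate since $h(\cdot,\mm)$ depends on finitely many coordinates.
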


\begin{proof}
Define
\begin{equation}\label{h infinite alleles}
h(\xx,\mm)=\frac{\Gamma(\theta+|\mm|)}{\Gamma(\theta)}\prod_{j\ge1}\frac{\Gamma(\alpha_{j})}{\Gamma(\alpha_{j}+m_{j})}\xx^{\mm},
\quad \quad \mm\in\Z^{\infty},|\mm|<\infty,
\end{equation}
which is in the domain of $\A_{\infty}$, with $(\alpha_{1},\alpha_{2},\ldots)$ as in \eqref{inf-alpha constraints} and $m_{j}=0$ if $\alpha_{j}=0$. Let also $\ee_{i}$ be the vector whose only non zero component is a 1 at the $i$th coordinate. A direct computation shows that
\begin{equation}\label{eq:WF-gen-xm}
\begin{split}\notag
\A_{\infty} h(\xx,\mm)
=&\,
\sum_{i\ge1}\bigg(\frac{\alpha_{i}m_{i}}{2}+\binom{m_{i}}{2}\bigg)\frac{\Gamma(\theta+|\mm|)}{\Gamma(\theta)}\prod_{j\ge1}\frac{\Gamma(\alpha_{j})}{\Gamma(\alpha_{j}+m_{j})}\xx^{\mm-\ee_{i}}\\
&\,-\sum_{i\ge1}\bigg(\frac{\theta m_{i}}{2}+\binom{m_{i}}{2}+\frac{1}{2}m_{i}\sum_{j\ne i}m_{j}\bigg)\frac{\Gamma(\theta+|\mm|)}{\Gamma(\theta)}\prod_{j\ge1}\frac{\Gamma(\alpha_{j})}{\Gamma(\alpha_{j}+m_{j})}\xx^{\mm}\\
=&\,\frac{\theta+|\mm|-1}{2}\sum_{i\ge1}m_{i}h(\xx,\mm-\ee_{i})-\frac{|\mm|(\theta+|\mm|-1)}{2}h(\xx,\mm).
\end{split}
\end{equation}
Hence, by \eqref{local duality}, the death process $M_{t}$ on $\Z^{\infty}$, which jumps from $\mm$ to $\mm-\ee_{i}$ at rate $m_{i}(\theta+|\mm|-1)/2$, is dual to the infinite Dirichlet diffusion with generator $\A_{\infty}$ with respect to \eqref{h infinite alleles}.
From the definition \eqref{prediction operator} of the prediction operator now we have
\begin{align*}
\psi_{t}\big(\pi_{\aa+\mm}\big)(\d\xx')
=&\,\int_{\X}h(\xx,\mm)\pi_{\aa}(\d\xx)P_{t}(\xx,\d \xx')\\
=&\,\int_{\X}h(\xx,\mm)\pi_{\aa}(\d\xx')P_{t}(\xx',\d \xx)\\
=&\,\pi_{\aa}(\d \xx')\E^{\xx'}[h(\XX_{t},\mm)]\\
=&\,\pi_{\aa}(\d \xx')\E^{\mm}[h(\xx',M_{t})]\\
=&\,\pi_{\aa}(\d \xx')\sum_{\substack{\oo \le \ii\le \mm}}p_{\mm,\mm-\ii}(t)h(\xx',\mm-\ii)\\
=&\,\sum_{\substack{\oo \le \ii\le \mm}}p_{\mm,\mm-\ii}(t)\pi_{\aa+\mm-\ii}(\d\xx')
\end{align*}
where the second equality holds in virtue of the reversibility of
$\XX_{t}$ with respect to $\pi_{\aa}$, the fourth by the duality established above together with \eqref{duality identity} and the fifth from Lemma \ref{lemma: death transitions probabilities} in the Appendix.
\end{proof}

Assuming now an observation $\yy=(y_{1},y_{2},\ldots)$ has the extended multinomial likelihood \eqref{extended MN}, the update step \eqref{update operator} in this case becomes
\begin{equation}\label{inf-dir conjugacy}
\phi_{\yy}\big(\pi_{\aa+\mm}\big)=\pi_{\aa+\mm+\yy},
\end{equation}
which follows from the conjugacy of the Dirichlet process (see Section \ref{subsec: FV-static}) by taking $\Y=\N$.

The following Proposition summarises the findings of this section by providing the learning algorithm that allows computable filtering in this framework. Let
\begin{equation}\label{set of multiplicities}
\M = \{\mm=(m_1,\ldots,m_{K})\in\Z^{K},\, K\in\N\},
\end{equation}
with a partial ordering defined by ``$<$'', so that $\mm<\nn$ if
$m_{j}\le n_{j}$ for all $j\ge1$ and $m_{j}< n_{j}$ for some
$j\ge1$. For $M\subset \M$, let also
\begin{equation}\label{G(M)}
G(M)=\{\nn\in\M:\ \oo\le\nn\le\mm,\, \mm\in M\}
\end{equation}
be the set of nonnegative vectors lying beneath those in $M$.

\begin{proposition}\label{algorithm infinite alleles}
Consider the family of finite mixtures
\begin{equation*}
\F=\bigg\{\sum_{\mm\in\Lambda}w_{\mm}h(\xx,\mm)\pi_{\aa}(\d \xx),\, \Lambda\subset\M,\ |\Lambda|<\infty,\ w_{\mm}\ge0,\ \sum_{\mm\in\Lambda}w_{\mm}=1\bigg\},
\end{equation*}
where $\pi_{\aa}$ is the law of \eqref{infinite-dimensional dirichlet} and $h(\xx,\mm)$ is as in \eqref{h infinite alleles}. Then, when $\XX_{t}$ has generator \eqref{eq:ethier-generator} and data are as in \eqref{extended MN}, $\F$ is closed under the application of the update and prediction operators \eqref{update operator} and \eqref{prediction operator}. Specifically,
\begin{equation}\label{Dir algorithm update}
  \phi_{\yy}\bigg ( \sum_{\mm \in \Lambda}  w_{\mm} h(\xx,\mm)
    \pi_{\aa}(\d \xx) \bigg)   = \sum_{\nn \in t(\yy,\Lambda)}  \hw_\nn  h(\xx,\nn)     \pi_{\aa}(\d \xx),
\end{equation}
with
\begin{equation}
  \label{eq:update-mix}
  \begin{split}
t(\yy,\Lambda)   :=&\, \{\nn: \nn=t(\yy,\mm), \mm \in \Lambda \} \\
\hw_\nn
  \propto&\, w_{\mm}
  \frac{\Gamma(\theta+|\mm|)}{\Gamma(\theta+|\mm|+|\yy|)}
  \prod_{j\ge1}  \frac{\Gamma(\alpha_{j}+\mm_{j}+y_{j})}{\Gamma(\alpha_{j}+\mm_{j})}
   \quad   \textrm{for}\quad  \nn = t(\yy,\mm) \,,\sum_{\nn
      \in t(\yy,\Lambda)} \hw_\nn =1\,,
  \end{split}
\end{equation}
and
\begin{equation}
  \label{eq:predict-mix}
  \psi_t\bigg ( \sum_{\mm \in \Lambda}  w_{\mm} h(\xx,\mm)
    \pi_{\aa}(\d \xx) \bigg) = \sum_{\nn \in G(\Lambda)} \bigg(
    \sum_{\substack{\mm \in \Lambda, \mm \geq \nn}} w_\mm
    p_{\mm,\nn}(t) \bigg) h(\xx,\nn) \pi_{\aa}(\d \xx).
\end{equation}
\end{proposition}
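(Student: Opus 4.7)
The plan is to reduce both formulas to the single-component case already at our disposal and then invoke linearity of the two operators. The key observation is that $h(\xx,\mm)\pi_{\aa}(\d\xx)=\pi_{\aa+\mm}(\d\xx)$, so each mixture component is itself an infinite-dimensional Dirichlet law with parameter $\aa+\mm$. The prediction operator $\psi_{t}$ is linear in the measure argument, while $\phi_{\yy}$ is a convex combination in which each component gets reweighted by its marginal likelihood and renormalised by the overall marginal likelihood.

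For the update \eqref{Dir algorithm update}, from the definition \eqref{update operator} I would write
\begin{equation*}
\phi_{\yy}\bigg(\sum_{\mm\in\Lambda} w_{\mm}\pi_{\aa+\mm}\bigg)\propto \sum_{\mm\in\Lambda} w_{\mm}\,p_{\pi_{\aa+\mm}}(\yy)\,\phi_{\yy}(\pi_{\aa+\mm}),
\end{equation*}
where $p_{\pi_{\aa+\mm}}(\yy)=\int \text{MN}(\yy;|\yy|,\xx)\pi_{\aa+\mm}(\d\xx)$ is the marginal likelihood of the component. Single-component conjugacy \eqref{inf-dir conjugacy} then identifies $\phi_{\yy}(\pi_{\aa+\mm})=\pi_{\aa+\mm+\yy}$, i.e.\ $t(\yy,\mm)=\mm+\yy$. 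The marginal likelihood evaluates to the Dirichlet--multinomial expression $\binom{|\yy|}{\yy}\frac{\Gamma(\theta+|\mm|)}{\Gamma(\theta+|\mm|+|\yy|)}\prod_{j\ge1}\frac{\Gamma(\alpha_{j}+m_{j}+y_{j})}{\Gamma(\alpha_{j}+m_{j})}$; cancelling the multinomial coefficient, which is common to every $\mm\in\Lambda$, produces the unnormalised weights $\hw_{\nn}$ in \eqref{eq:update-mix}.

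For the prediction \eqref{eq:predict-mix}, linearity of $\psi_{t}$ combined with Theorem \ref{prop: PR14-WF-infty} gives
\begin{equation*}
\psi_{t}\bigg(\sum_{\mm\in\Lambda}w_{\mm}\pi_{\aa+\mm}\bigg) = \sum_{\mm\in\Lambda} w_{\mm}\sum_{\oo\le\ii\le\mm} p_{\mm,\mm-\ii}(t)\,\pi_{\aa+\mm-\ii}.
\end{equation*}
Setting $\nn=\mm-\ii$ (so $\oo\le\nn\le\mm$) and swapping the order of summation collects the coefficient of $\pi_{\aa+\nn}$ as $\sum_{\mm\in\Lambda,\,\mm\ge\nn}w_{\mm}\,p_{\mm,\nn}(t)$, and the union over $\mm\in\Lambda$ of these inner index sets is exactly $G(\Lambda)$ in \eqref{G(M)}.

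The only substantive step is the Dirichlet--multinomial identity used in the update, which must be verified in the countably-infinite-type setting. I would establish it by restricting attention to a finite partition $A_{1},\ldots,A_{K}$ of $\N$ separating the types with $y_{j}>0$ (a finite set since $|\yy|<\infty$), invoking the projective property of the Dirichlet process recalled in Section \ref{subsec: FV-static}, and applying the standard finite-dimensional Dirichlet--multinomial identity. Everything else is routine index bookkeeping and the finiteness of $\Lambda$ ensures that $G(\Lambda)$ is also finite, so the resulting mixture remains in $\F$.
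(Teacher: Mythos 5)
Your proposal is correct and follows essentially the same route as the paper: the update is the mixture quasi-linearity of $\phi_{\yy}$ combined with single-component conjugacy \eqref{inf-dir conjugacy}, and the prediction is linearity of $\psi_{t}$ plus Theorem \ref{prop: PR14-WF-infty} and a rearrangement of sums over $\nn=\mm-\ii$. Your only addition is to spell out the Dirichlet--multinomial marginal likelihood (and its reduction to the finite-dimensional case via the projective property), which the paper leaves implicit when stating the weights \eqref{eq:update-mix}; that is a harmless and correct elaboration, not a different method.
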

\begin{proof}
The update operation, given by \eqref{Dir algorithm update} and \eqref{eq:update-mix}, follows from \eqref{inf-dir conjugacy} together with the fact that for a mixture $\sum_{i=1}^n w_i \nu_i $ we have
\begin{equation}\label{update quasi linearity}
 \phi_{\yy}\bigg(\sum_{i=1}^n w_i \nu_i \bigg)
=\sum_{i=1}^n {w_i p_{\nu_i}(\yy) \over \sum_j w_j
p_{\nu_j}(\yy)} \phi_\yy(\nu_i),\quad \quad
p_{\nu_i}(\yy)=\int_{\X}f_{\xx}(\yy)\nu_{i}(\d \xx),
\end{equation}
while \eqref{eq:predict-mix} follows from the fact that
  \begin{equation}\label{prediction linearity}
 \psi_{t}\bigg(\sum_{i=1}^n w_i \nu_i \bigg)=\sum_{i=1}^n w_i  \psi_{t}(\nu_i)
\end{equation}
together with Theorem \ref{prop: PR14-WF-infty} and a rearrangement of the sums.
\end{proof}


\subsection{Filtering Fleming--Viot processes}\label{subsec: FV-FV}

Let now the signal $X_{t}$ be a FV process with generator \eqref{FV-generator}. We assume that given the signal state $X_{t}=x\in\MM_{1}$, observations are drawn independently from $x$, i.e.,
\begin{equation}\label{DP data}
y_{i}\mid x\overset{iid}{\sim}x.
\end{equation}
A sample $\yy_{1:m}=(y_{1},\ldots,y_{m})$ from the discrete distribution $x$ will feature ties
among the observations with positive probability. Denote by
$(y_{1}^{*},\ldots,y_{K_{m}}^{*})$ the distinct values in $\yy_{1:m}$ and by $\mm=(m_{1},\ldots,m_{K_{m}})$ the associated multiplicities, so that $|\mm|=m$.
The following result provides the prediction step for filtering FV
processes.

\begin{theorem}\label{prop: FV propagation}
Let $X_{t}$ have generator \eqref{FV-generator}, with invariant law $\Pi_{\alpha}$. Then, for any $\yy_{1:m}$ with multiplicities $\mm$,
\begin{equation}\label{FV propagation}
\psi_{t}\Bigg(\Pi_{\alpha+\sum_{i=1}^{K_{m}}m_{i}\delta_{y_{i}^{*}}}\Bigg)=
\sum_{\nn\in G(\mm)}p_{\mm,\nn}(t)
\Pi_{\alpha+\sum_{i=1}^{K_{m}}n_{i}\delta_{y_{i}^{*}}},
\end{equation}
with $p_{\mm,\nn}(t)$ as in \eqref{transition probabilities} and $G$ as in \eqref{G(M)}.
\end{theorem}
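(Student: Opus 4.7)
The plan is to follow the strategy summarised in Figure \ref{scheme}: project both sides of \eqref{FV propagation} onto an arbitrary measurable partition of $\Y$, reduce the identity to the finite-dimensional Wright--Fisher case treated in \cite{PR14}, and then invoke the fact that the law of a random element of $\MM_{1}(\Y)$ is determined by its finite-dimensional distributions on measurable partitions.

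Fix a measurable partition $A_{1},\ldots,A_{K}$ of $\Y$ that refines $\{y_{1}^{*},\ldots,y_{K_{m}}^{*}\}$, with $y_{i}^{*}\in A_{i}$ for $i\le K_{m}$ and no observed atom lying in $A_{j}$ for $j>K_{m}$. Set $\aa=(\alpha(A_{1}),\ldots,\alpha(A_{K}))$ and $\mm'=(m_{1},\ldots,m_{K_{m}},0,\ldots,0)\in\Z^{K}$. By the projective property of the Dirichlet process recalled in Section \ref{subsec: FV-static}, the map $x\mapsto(x(A_{1}),\ldots,x(A_{K}))$ pushes $\Pi_{\alpha+\sum_{i}m_{i}\delta_{y_{i}^{*}}}$ forward to the Dirichlet distribution $\pi_{\aa+\mm'}$. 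The projective property of the FV process discussed after \eqref{eq:WF-generator} ensures that under the same map the signal $X_{t}$ projects to a $K$-dimensional WF diffusion with generator $\Gwf$ and invariant law $\pi_{\aa}$; hence projection and prediction commute, and the pushforward of $\psi_{t}(\Pi_{\alpha+\sum_{i}m_{i}\delta_{y_{i}^{*}}})$ equals $\psi^{\mathrm{WF}}_{t}(\pi_{\aa+\mm'})$, where $\psi^{\mathrm{WF}}_{t}$ denotes the prediction operator associated with $\Gwf$. The finite-dimensional counterpart of Theorem \ref{prop: PR14-WF-infty} for $\Gwf$, established in \cite{PR14} by the same local duality argument used above for \eqref{h infinite alleles}, then yields
\begin{equation*}
\psi^{\mathrm{WF}}_{t}(\pi_{\aa+\mm'})
=\sum_{\oo\le\ii\le\mm'}p_{\mm',\mm'-\ii}(t)\,\pi_{\aa+\mm'-\ii}.
\end{equation*}

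To conclude, note that the vanishing last $K-K_{m}$ coordinates of $\mm'$ force $\ii$ to vanish on the same coordinates, and inspection of \eqref{transition probabilities}--\eqref{hypergeometric} shows that $p_{\mm,\nn}(t)$ is invariant under appending zero coordinates to both $\mm$ and $\nn$. Meanwhile, by linearity of pushforwards and the Dirichlet projective property, the projection of the right-hand side of \eqref{FV propagation} is the mixture $\sum_{\nn\in G(\mm)}p_{\mm,\nn}(t)\,\pi_{\aa+\nn''}$ with $\nn''=(n_{1},\ldots,n_{K_{m}},0,\ldots,0)$, which matches the previous display term by term. Since $A_{1},\ldots,A_{K}$ was arbitrary and such finite-dimensional distributions characterise the law on $\MM_{1}(\Y)$, \eqref{FV propagation} follows. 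The main delicate step is the commutation of projection and prediction: one must verify that the pushforward of the FV semigroup under $x\mapsto(x(A_{1}),\ldots,x(A_{K}))$ coincides with the $K$-dimensional WF semigroup carrying the parameter $\aa$ inherited from $\alpha$. This is classical (cf.\ \cite{EK93}, Section~8), but needs to be invoked carefully so that the finite-dimensional reduction produces exactly the coefficients $p_{\mm,\nn}(t)$ of \eqref{transition probabilities}.
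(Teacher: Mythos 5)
Your proposal is correct in substance and follows the same Figure \ref{scheme} strategy, but it diverges from the paper at the key step. The paper fixes a \emph{completely arbitrary} measurable partition $(A_{1},\ldots,A_{K})$, so that several observed atoms $y_{i}^{*}$ may fall into the same cell; the projected multiplicities $\tilde\mm$ then arise by merging, and the heart of the paper's proof is the verification that the mixture weights are consistent under such merging, namely $\sum_{\ii\in G(\mm):\,\tilde\ii=\nn}p_{\mm,\ii}(t)=p_{\tilde\mm,\nn}(t)$, which via \eqref{transition probabilities} reduces to the aggregation property of the multivariate hypergeometric distribution (\cite{JKB97}, eq.~39.3). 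You avoid that combinatorial lemma entirely by restricting to partitions that \emph{separate} the atoms, in which case the projected parameter is just a zero-padded copy of $\mm$ and the weights match termwise trivially. What the paper's argument buys is the projective consistency of the candidate mixture over \emph{all} partitions, a fact it re-uses verbatim in the proof of Theorem \ref{prop: DW propagation}; what your argument buys is the elimination of the hypergeometric identity, at the cost of shifting the burden onto a measure-theoretic determining-class claim. The remaining ingredients you invoke — the commutation of projection and prediction via the fact that the projected FV process is itself a $K$-dimensional WF diffusion, the finite-$K$ propagation formula of \cite{PR14} (the finite-dimensional counterpart of Theorem \ref{prop: PR14-WF-infty}), and the invariance of $p_{\mm,\nn}(t)$ under zero-padding (since $\lambda$ depends only on $|\mm|$ and $\binom{0}{0}=1$ in \eqref{hypergeometric}) — are all sound and coincide with what the paper uses.

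The one imprecision is your closing sentence: ``Since $A_{1},\ldots,A_{K}$ was arbitrary'' is literally false, because your partition was constrained to separate the atoms. The repair is one line: separating partitions form a determining class. Given any finite Borel partition $(B_{1},\ldots,B_{J})$ of $\Y$, refine it by intersecting each $B_{j}$ with the singletons $\{y_{1}^{*}\},\ldots,\{y_{K_{m}}^{*}\}$ (Borel, since $\Y$ is Polish) and with their complement; the refinement separates the atoms, and $x(B_{j})$ is the sum of the coordinates of the refined projection whose cells lie in $B_{j}$. Equality in law of the projections on all separating partitions therefore implies, by pushing forward under coordinate aggregation, equality of the finite-dimensional distributions on every partition, and hence equality of the two laws on $\MM_{1}(\Y)$. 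With that sentence added, your proof is complete.
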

\begin{proof}
Fix an arbitrary partition $(A_{1},\ldots,A_{K})$ of $\Y$ with $K$ classes, and denote by $\tilde\mm$ the multiplicities resulting from binning $\yy_{1:m}$ into the corresponding cells. Then
\begin{equation}\label{DP to dir}
\Pi_{\alpha+\sum_{i=1}^{K_{m}}m_{i}\delta_{y_{i}^{*}}}(A_{1},\ldots,A_{K})
\sim
\pi_{\alpha+\tilde\mm},
\end{equation}
where $\Pi_{\alpha+\sum_{i=1}^{K_{m}}m_{i}\delta_{y_{i}^{*}}}(A_{1},\ldots,A_{K})$ denotes $\Pi_{\alpha+\sum_{i=1}^{K_{m}}m_{i}\delta_{y_{i}^{*}}}(\cdot)$ evaluated on $(A_{1},\ldots,A_{K})$.
Since the projection onto the same partition of the FV process is a $K$-dimensional WF process (see Section \ref{subsec: FV-model}), from Theorem \ref{prop: PR14-WF-infty} we have
\begin{align}\label{WF tilde propagation}
\psi_{t}\Big(\Pi_{\alpha+\sum_{i=1}^{K_{m}}m_{i}\delta_{y_{i}^{*}}}(A_{1},\ldots,A_{K})\Big)
=&\,\psi_{t}(\pi_{\alpha+\tilde\mm})\\
=&\,\sum_{\substack{\nn\in G(\tilde\mm)}}p_{\tilde\mm,\nn}(t)\pi_{\alpha+\nn}.\notag
\end{align}
Furthermore, since a Dirichlet process is characterised by its finite-dimensional projections, now it suffices to show that
\begin{equation*}
\sum_{\nn\in G(\mm)}p_{\mm,\nn}(t)\Pi_{\alpha+\sum_{i=1}^{K_{m}}n_{i}\delta_{y_{i}^{*}}}(A_{1},\ldots,A_{K})
=\sum_{\substack{\nn\in G(\tilde\mm)}}p_{\tilde\mm,\nn}(t)\pi_{\alpha+\nn}
\end{equation*}
so that the operations of propagation and projection commute. Given \eqref{DP to dir}, we only need to show that the mixture weights are consistent with respect to fragmentation and merging of classes, that is
\begin{equation}\notag
\sum_{\ii\in G(\mm):\, \tilde \ii=\nn}p_{\mm,\ii}(t)=p_{\tilde\mm,\nn}(t),
\end{equation}
where $\tilde\ii$ denotes the projection of $\ii$ onto $(A_{1},\ldots,A_{K})$.
Using \eqref{transition probabilities}, the previous in turn reduces to
\begin{equation}\label{hypergeometric marginal}
\sum_{\ii\in G(\mm):\, \tilde \ii=\nn}p(\ii;\,\mm,m-i)=p(\nn;\,\tilde\mm,m-n),
\end{equation}
which holds by the marginalization properties of the multivariate hypergeometric distribution. Cf.~\cite{JKB97}, equation 39.3.
\end{proof}

Hence a single Dirichlet measure evolves into a finite mixture of Dirichlet measures. Note that when $\mm=(0,\ldots,0)$, \eqref{FV propagation} reduces to the stationarity equation for FV processes.

We now turn to the update step. Let $(y_{K_{m}+1}^{*},\ldots,y_{K_{m+n}}^{*})$ be the distinct values observed in an additional sample $\yy_{m+1:m+n}$ of size $n$ that are not included in $(y_{1}^{*},\ldots,y_{K_{m}}^{*})$, and let $\nn$ be the multiplicities of the full vector of distinct values $(y_{1}^{*},\ldots,y_{K_{m+n}}^{*})$.  
Denote also by $\mathrm{PU}_{\alpha}(\yy_{m+1:m+n}\mid \yy_{1:m})$ the joint distribution of $\yy_{m+1:m+n}$ sampled from a conditional Blackwell--MacQueen P\'olya urn scheme \citep{BM73}, i.e.,
\begin{equation*}
\begin{split}
&\,Y_{m+i+1} \mid \yy_{1:m+i} \sim
\frac{\theta P_{0}+\sum_{j=1}^{m+i}\delta_{y_{j}}}{\theta+m+i},\quad \quad i=0,\ldots,n-1.
\end{split}
\end{equation*}
The following result, stated here in our notation for ease of the reader, is a special case of \cite{A74}.

\begin{lemma}\label{prop: FV update multi}
Let $y_{m+i}\mid x\sim x$, $i=1,\ldots,n$, with
\begin{equation*}
x\sim \sum_{\mm \in M}w_{\mm}\Pi_{\alpha+\sum_{i=1}^{K_{m}}m_{i}\delta_{y_{i}^{*}}},\quad \sum_{\mm\in M}w_\mm=1.
\end{equation*}
Then
\begin{align}\label{eq: NP-update}
\phi_{\yy_{m+1:m+n}}\Bigg(\sum_{\mm \in M}w_{\mm}\Pi_{\alpha+\sum_{i=1}^{K_{m}}m_{i}\delta_{y_{i}^{*}}}\Bigg)
=
\sum_{\nn\in t(\yy_{m+1:m+n},M)}\hat w_{\nn}\Pi_{\alpha+\sum_{i=1}^{K_{m+n}}n_{i}\delta_{y_{i}^{*}}},
\end{align}
where $t(\cdot)$ is as in Proposition \ref{algorithm infinite alleles} and
\begin{equation}\label{updated mixture weights}
\hat w_{\nn}\propto
w_{\mm}\,\mathrm{PU}_{\alpha}(\yy_{m+1:m+n}\mid \yy_{1:m}).
\end{equation}
\end{lemma}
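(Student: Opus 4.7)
The plan is to combine three ingredients: (i) the quasi-linearity of the update operator on mixtures, given in \eqref{update quasi linearity}; (ii) the conjugacy of the Dirichlet process, recalled in Section \ref{subsec: FV-static}; and (iii) the Blackwell--MacQueen identification of the marginal predictive law of an iid sample from a DP with a P\'olya urn.

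Writing $\nu_\mm := \Pi_{\alpha+\sum_{i=1}^{K_m}m_i\delta_{y_i^*}}$, the identity \eqref{update quasi linearity} immediately reduces the problem to computing, for each $\mm\in M$, the posterior $\phi_{\yy_{m+1:m+n}}(\nu_\mm)$ and the marginal likelihood $p_{\nu_\mm}(\yy_{m+1:m+n})$. The posterior is handled by iterating the single-observation DP conjugacy $n$ times, which updates the base measure by $\sum_{j=1}^n \delta_{y_{m+j}}$; re-expressing this in terms of distinct values and multiplicities yields exactly $\Pi_{\alpha+\sum_{i=1}^{K_{m+n}}n_i\delta_{y_i^*}}$ with $\nn = t(\yy_{m+1:m+n},\mm)$.

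The remaining step, and the only one that requires a genuine computation, is to identify
\begin{equation*}
p_{\nu_\mm}(\yy_{m+1:m+n}) \;=\; \int_{\MM_1(\Y)} \prod_{j=1}^n x(\d y_{m+j})\,\nu_\mm(\d x) \;=\; \mathrm{PU}_{\alpha}(\yy_{m+1:m+n}\mid\yy_{1:m}).
\end{equation*}
This is the classical Blackwell--MacQueen representation applied to the DP with base measure $\alpha+\sum m_i\delta_{y_i^*}$: the sequential predictive rule factors into the product $\prod_{i=0}^{n-1}(\theta+m+i)^{-1}\bigl(\theta P_0+\sum_{j=1}^{m+i}\delta_{y_j}\bigr)(\d y_{m+i+1})$, which is precisely the definition of $\mathrm{PU}_\alpha(\cdot\mid\yy_{1:m})$ given before the lemma. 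Substituting back into \eqref{update quasi linearity} yields a sum indexed by $\mm\in M$ of Dirichlet processes with atoms $\Pi_{\alpha+\sum n_i\delta_{y_i^*}}$ and weights proportional to $w_\mm\,\mathrm{PU}_\alpha(\yy_{m+1:m+n}\mid\yy_{1:m})$, which is \eqref{eq: NP-update}--\eqref{updated mixture weights}.

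The main subtlety, rather than any deep obstacle, is bookkeeping: two distinct $\mm,\mm'\in M$ may after augmentation by $\yy_{m+1:m+n}$ produce the same multiplicity vector $\nn$, in which case their contributions are aggregated inside a single $\hat w_\nn$; this is the reason the sum on the right of \eqref{eq: NP-update} is indexed over $\nn\in t(\yy_{m+1:m+n},M)$ rather than over $\mm\in M$. Since the result is in any case a corollary of \cite{A74}, one may alternatively just translate the mixture-of-Dirichlet-processes conjugacy statement recalled in Section \ref{subsec: FV-static} into the present notation, but the three-step argument above is self-contained and fits the operator formalism used elsewhere in the paper.
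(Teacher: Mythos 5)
Your proof is correct, but it takes a more self-contained route than the paper. The paper's own proof is essentially a translation exercise: it observes that the prior is a mixture of Dirichlet processes with discrete mixing measure $H(\cdot)=\sum_{\mm\in M}w_{\mm}\delta_{\mm}(\cdot)$ and transition measure $\alpha_{\mm}=\alpha+\sum_{j}m_{j}\delta_{y_{j}^{*}}$, and then invokes Lemma 1 and Corollary 3.2$'$ of \cite{A74} to read off both the conditional posterior $x\mid\mm,\yy_{m+1:m+n}\sim\Pi_{\alpha_{\mm}+\sum_{i}\delta_{y_{m+i}}}$ and the posterior mixing weights $H(\mm\mid\yy_{m+1:m+n})\propto w_{\mm}\,\mathrm{PU}_{\alpha}(\yy_{m+1:m+n}\mid\yy_{1:m})$. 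You instead rederive Antoniak's conclusion from first principles: quasi-linearity of $\phi_{\yy}$ on mixtures \eqref{update quasi linearity}, iterated single-observation DP conjugacy for each component, and the Blackwell--MacQueen identification of the component marginal likelihood $p_{\nu_{\mm}}(\yy_{m+1:m+n})$ with the P\'olya urn predictive $\mathrm{PU}_{\alpha}(\cdot\mid\yy_{1:m})$. What your version buys is transparency and consistency with the operator formalism used for the prediction steps elsewhere in the paper, making explicit that the $\mathrm{PU}$ factor in \eqref{updated mixture weights} depends on $\mm$ through the multiplicities of $\yy_{1:m}$ (something the paper's notation leaves implicit); what the paper's version buys is brevity and a clean hook into the established mixtures-of-Dirichlet-processes literature. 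You correctly flag this equivalence yourself in your closing remark.

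One small inaccuracy, not affecting correctness: you warn that distinct $\mm,\mm'\in M$ may be mapped by $t(\yy_{m+1:m+n},\cdot)$ to the same $\nn$, requiring aggregation of weights. Here that cannot happen, since $t(\yy_{m+1:m+n},\mm)$ simply adds to $\mm$ the multiplicity vector of the new sample (extended by the counts at newly observed distinct values), hence is injective in $\mm$; the sum over $\nn\in t(\yy_{m+1:m+n},M)$ is in bijection with the sum over $\mm\in M$. Your aggregated formula degenerates to the correct one, so this is overcaution rather than an error.
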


\begin{proof}
The distribution $x$ is a mixture of Dirichlet processes with mixing measure $H(\cdot)=\sum_{\mm\in M}w_{\mm}\delta_{\mm}(\cdot)$ on $M$ and transition measure
\begin{equation*}
\alpha_{\mm}(\cdot)
=\alpha(\cdot)+\sum_{j=1}^{K_{m}}m_{j}\delta_{y_{j}^{*}}(\cdot)
=\alpha(\cdot)+\sum_{i=1}^{m}\delta_{y_{i}}(\cdot),
\end{equation*}
where $\yy_{1:m}$ is the full sample.
See Section \ref{subsec: FV-static}. Lemma 1 and Corollary 3.2' in \cite{A74} now imply that
\begin{equation*}
x\mid \mm,\yy_{m+1:m+n}\sim
\Pi_{\alpha_{\mm}(\cdot)+\sum_{i=m+1}^{n}\delta_{y_{i}}(\cdot)}
=\Pi_{\alpha(\cdot)+\sum_{i=1}^{n}\delta_{y_{i}}}
\end{equation*}
and $H(\mm\mid \yy_{m+1:m+n})\propto w_{\mm}\,\mathrm{PU}_{\alpha}(\yy_{m+1:m+n}\mid \yy_{1:m})$.
\end{proof}

Hence, the updated mixture of Dirichlet processes is still a mixture of
Dirichlet processes with different multiplicities and possibly new
point masses in the parameter measures. Iterating the propagation and
update operations provided by Theorem \ref{prop: FV propagation} and
Lemma \ref{prop: FV update multi} yields the computable filter
for a partially observed FV process, which sequentially evaluates
$\L(X_{t_n} | Y_{1:n})$. This is summarised in the next Proposition.

\begin{proposition}\label{algorithm FV}
Consider the family of finite mixtures of Dirichlet processes
\begin{align}\notag
\Fdir=
\Bigg\{\sum_{\mm\in M}w_{\mm}\Pi_{\alpha+\sum_{i=1}^{K_{m}}m_{i}\delta_{y_{i}^{*}}} :\ M \subset \M,\ |M|<\infty,\, w_{\mm}\ge0,\  \sum_{\mm\in M}w_\mm=1\Bigg\}.
\end{align}
Then, when $X_{t}$ has generator \eqref{FV-generator} and data are as in \eqref{DP data}, $\Fdir$ is closed under the application of the update and prediction operators \eqref{update operator} and \eqref{prediction operator}. Specifically,
\begin{align}\label{eq: FV algorithm update}
\phi_{\yy_{m+1:m+n}}\Bigg(\sum_{\mm \in M}w_{\mm}\Pi_{\alpha+\sum_{i=1}^{K_{m}}m_{i}\delta_{y_{i}^{*}}}\Bigg)
=
\sum_{\nn\in t(\yy_{m+1:m+n},M)}\hat w_{\nn}\Pi_{\alpha+\sum_{i=1}^{K_{m+n}}n_{i}\delta_{y_{i}^{*}}},
\end{align}
with
\begin{equation}
  \label{eq:update-mix FV}\notag
  \begin{split}
t(\yy,\Lambda) :=&\, \{\nn: \nn=t(\yy,\mm), \mm \in \Lambda \} \\
\hat w_{\nn}\propto&\,
w_{\mm}\,\mathrm{PU}_{\alpha}(\yy_{m+1:m+n}\mid \yy_{1:m})
   \quad   \textrm{for}\quad  \nn = t(\yy,\mm) \,,\sum_{\nn
      \in t(\yy,\Lambda)} \hw_\nn =1\,,
  \end{split}
\end{equation}
and
\begin{equation}\label{eq: FV algorithm propagation}
\psi_{t}\Bigg(\sum_{\mm\in M}w_{\mm}\Pi_{\alpha+\sum_{i=1}^{K_{m}}m_{i}\delta_{y_{i}^{*}}}\Bigg)=
\sum_{\nn\in G(M)}
\Bigg(\sum_{\mm\in M,\, \mm\ge \nn}w_{\mm}p_{\mm,\nn}(t)\Bigg)
\Pi_{\alpha+\sum_{i=1}^{K_{m}}n_{i}\delta_{y_{i}^{*}}}.
\end{equation}
\end{proposition}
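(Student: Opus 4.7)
The proposal is to assemble Proposition \ref{algorithm FV} by combining the two ingredients already developed: the conjugacy for mixtures of Dirichlet processes (Lemma \ref{prop: FV update multi}) for the update step, and Theorem \ref{prop: FV propagation} together with the linearity of $\psi_t$ (equation \eqref{prediction linearity}) for the prediction step. Closure of $\Fdir$ under both operators will then follow immediately, so the bulk of the work is bookkeeping on the indices.

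For the \textbf{update step}, the claimed identity \eqref{eq: FV algorithm update} is literally the content of Lemma \ref{prop: FV update multi}. The only thing to verify is that the output remains in $\Fdir$: this amounts to noting that $t(\yy_{m+1:m+n},M)$ is a finite subset of $\M$ whenever $M$ is, that the new weights $\hat w_{\nn}$ are nonnegative and normalized by construction, and that the updated parameter measures $\alpha+\sum_{i=1}^{K_{m+n}}n_i\delta_{y_i^*}$ are of the form prescribed by $\Fdir$ (a base measure $\alpha$ plus atoms with integer multiplicities at observed locations).

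For the \textbf{prediction step}, I would first apply the linearity identity \eqref{prediction linearity}, valid since $\psi_t$ is defined by integration against a Markov kernel, to obtain
\begin{equation*}
\psi_t\Bigg(\sum_{\mm\in M}w_{\mm}\Pi_{\alpha+\sum_{i=1}^{K_{m}}m_{i}\delta_{y_{i}^{*}}}\Bigg)
=\sum_{\mm\in M}w_{\mm}\,\psi_t\Big(\Pi_{\alpha+\sum_{i=1}^{K_{m}}m_{i}\delta_{y_{i}^{*}}}\Big).
\end{equation*}
Plugging in Theorem \ref{prop: FV propagation} componentwise gives a double sum
\begin{equation*}
\sum_{\mm\in M}w_{\mm}\sum_{\nn\in G(\mm)}p_{\mm,\nn}(t)\,\Pi_{\alpha+\sum_{i=1}^{K_{m}}n_{i}\delta_{y_{i}^{*}}}.
\end{equation*}
Swapping the order of summation and collecting terms with the same parameter measure yields \eqref{eq: FV algorithm propagation}: for each $\nn$ in the union $G(M)=\bigcup_{\mm\in M}G(\mm)$, the coefficient is $\sum_{\mm\in M,\,\mm\ge\nn}w_{\mm}p_{\mm,\nn}(t)$, which is nonnegative and sums to one since $\sum_{\nn\in G(\mm)}p_{\mm,\nn}(t)=1$ for every $\mm$ (these are transition probabilities of the dual death process). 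This also verifies that the output lies in $\Fdir$.

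No step is really an obstacle here — the nontrivial analytic work was done in Theorem \ref{prop: FV propagation} (projection/propagation interchange via multivariate hypergeometric consistency) and in Lemma \ref{prop: FV update multi} (Antoniak's conjugacy for mixtures of Dirichlet processes). The only mild care required is to make sure the re-indexing in the double sum is correct and that weight normalization is preserved under both operators; both are immediate from the identities already recorded.
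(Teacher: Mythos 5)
Your proposal is correct and follows exactly the paper's own proof: the update step is taken verbatim from Lemma \ref{prop: FV update multi}, and the prediction step combines Theorem \ref{prop: FV propagation} with the linearity identity \eqref{prediction linearity} and a rearrangement of the double sum over $\mm\in M$ and $\nn\in G(\mm)$. Your additional checks that the weights remain nonnegative and normalized (via $\sum_{\nn\in G(\mm)}p_{\mm,\nn}(t)=1$) and that the output stays in $\Fdir$ are sound, if left implicit in the paper.
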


\begin{proof}
The update operation \eqref{eq: FV algorithm update} follows directly from Lemma \ref{prop: FV update multi}.
The prediction operation \eqref{eq: FV algorithm propagation} for elements of $\F_{\Pi}$ follows from Theorem \ref{prop: FV propagation} together with \eqref{prediction linearity} and a rearrangement of the sums, so that
\begin{align*}
\psi_{t}\Bigg(\sum_{\mm\in M}w_{\mm}\Pi_{\alpha+\sum_{i=1}^{K_{m}}m_{i}\delta_{y_{i}^{*}}}\Bigg)
=&\,\sum_{\mm\in M}w_{\mm}
\sum_{\substack{\nn\in G(\mm)} }
p_{\mm,\nn }(t)
\Pi_{\alpha+\sum_{i=1}^{K_{m}}n_{i}\delta_{y_{i}^{*}}}\\
=&\,\sum_{\nn\in G(M)}
\Bigg(\sum_{\mm\in M,\,\mm\ge \nn}w_{\mm}p_{\mm,\nn}(t)\Bigg)
\Pi_{\alpha+\sum_{i=1}^{K_{m}}n_{i}\delta_{y_{i}^{*}}}.
\end{align*}
\end{proof}




\section{Filtering Dawson--Watanabe processes}\label{sec: DW}

\subsection{The static model: gamma random measures}\label{subsec: DW-static}

Gamma random measures can be thought of as the counterpart of Dirichlet processes in the context of finite intensity measures. A gamma random measure $z\in\MM(\Y)$ with shape parameter $\alpha$ as in \eqref{alpha measure} and rate parameter $\beta>0$, denoted $z\sim \Gamma^{\beta}_{\alpha}$, admits representation
\begin{equation}\label{gamma process series}
z(\cdot)=\beta^{-1}\sum_{i=1}^{\infty}Q_{i}\delta_{Y_{i}}(\cdot), \quad  Y_{i}\overset{iid}{\sim}P_{0},
\end{equation}
with $\{Q_{i}, i\ge1\}$ as in \eqref{DP series representation}. The conjugacy and projection properties for gamma random measures are as follows:
\begin{itemize}
\item \emph{Conjugacy}:
gamma random measures are conjugate with respect to Poisson point processes data. Let $N$ be a Poisson point process on $\Y$ with random intensity measure $z\sim\Gamma^{\beta}_{\alpha}$, i.e., conditionally on $z$, $N(A_i)\overset{ind}{\sim}\text{Po}(z(A_{i}))$ for any disjoint sets $A_1,\ldots,A_K\in\Y$, $K\in\N$. Let $m:=N(\Y)$ and $y_i:=N(A_i)/N(\Y)$, $i=1,\ldots,K$, so that
\begin{equation}\label{poisson data}
y_{i}\mid z,m\overset{iid}{\sim}z/|z|,\quad \quad
m\mid z\sim\text{Po}(|z|),\quad \quad
z\sim \Gamma^{\beta}_{\alpha},
\end{equation}
where $|z|:=z(\Y)$ is the total mass of $z$ (recall that $z\in\MM(\Y)$ is a finite measure). Then
\begin{equation}\label{gamma conjugacy}
z\mid y_{1:m}\sim \Gamma^{\beta+m}_{\alpha+\sum_{i=1}^{m}\delta_{y_{i}}}.
\end{equation}
\item \emph{Projection}: for a measurable partition $A_{1},\ldots,A_{K}$ of  $\Y$, the vector $(z(A_{1}),\ldots,z(A_{K}))$ has independent components $z(A_{i})$ with gamma distribution $\text{Ga}(\alpha_{i},\beta)$, $\alpha_{i}=\alpha(A_{i})$.
\end{itemize}
The above conjugacy property was showed by \cite{L82}.
Finally, it is well known that \eqref{DP series representation} and \eqref{gamma process series} satisfy the following relation in distribution
\begin{equation}\label{gamma-dirichlet algebra}
x(\cdot)=\frac{z(\cdot)}{z(\Y)}\sim\Pi_\alpha,
\end{equation}
where $x$ is independent of $z(\Y)$, which extends to the
infinite dimensional case the well known relationship between beta and
gamma random variables. See for example \cite{DVJ08}, Example 9.1(e).
See also \cite{F10}, Section 1.3, for a dynamic analog of \eqref{gamma-dirichlet algebra} linking CIR and WF processes and \cite{P91} for a measure-valued version.


\subsection{The signal: Dawson--Watanabe processes}\label{subsec: DW-model}

 Dawson--Watanabe (DW) processes are branching measure-valued diffusions taking values in the space of finite discrete measures. See \cite{D93} and \cite{L11} for reviews.
Here we are interested in the special case of subcritical branching
with immigration, where subcriticality refers to the fact that the mean number of offspring per individual in the underlying population is less than one. We will consider DW processes with generator
\begin{align}\label{DW-generator}
\Gdw \varphi(z)
=&\,\frac{1}{2}\int_{\Y}z(\d y)\frac{\partial^{2}\varphi(z)}{\partial z(\d y)^{2}}
+\int_{\Y}z(\d y)\tilde B\bigg(\frac{\partial\varphi(z)}{\partial z(\cdot)}\bigg)(y),
\end{align}
where $\tilde B$ is
\begin{equation}\label{immigration operator}\notag
(\tilde Bf)(y)=\frac{\na}{2}\int_{\Y}f(u)P_{0}(\d y')-\frac{\beta}{2}f(y)
,\quad \quad f\in C(\Y),
\end{equation}
 and the domain of $\Gdw$ is as in \eqref{FV-generator} except that $C^{2}(\R^{k})$ is replaced by its subspace of functions with compact support. Contrary to \eqref{FV-generator}, whose first term describes substitution of individuals in the underlying population, the first term in $\Gdw$ describes addition of individuals through branching, whereas the second accounts for independent immigration of individuals, whose type is selected according to the probability distribution $P_0$. These heuristics provide intuition for the fact that DW processes drive evolving measures with non constant mass.
The DW process with the above operator is known to be stationary and reversible with respect to the law $\Gamma^{\beta}_{\alpha}$ of a gamma random measure as in \eqref{gamma process series} \citep{S90,EG93b}.

Let $Z_{t}$ have generator \eqref{DW-generator}. Given a measurable partition $A_{1},\ldots,A_{K}$ of $\Y$, the vector $(Z_{t}(A_{1}),\ldots,Z_{t}(A_{K}))$ has independent components $Z_{t}(A_{i})$ each driven by a Cox--Ingersoll--Ross (CIR) diffusion \citep{CIR85}. These are also subcritical continuous-state branching processes with immigration, reversible and ergodic with respect to a $\text{Ga}(\alpha_{i},\beta)$ distribution, with generator
\begin{equation}\label{CIR generator}
\Gcir_{i}f(z_{i})=\frac{1}{2}(\alpha_{i}-\beta z_{i})f'(z_{i})+\frac{1}{2}z_{i}f''(z_{i}),
\end{equation}
acting on $C^{2}([0,\infty))$ functions which vanish at infinity. See \cite{KW71}.

As for FV and WF processes, a parameter that controls the speed could also be introduced in this case. This corresponds to the original parametrisation by \cite{CIR85}, whereby the process has generator
\begin{equation*}
\mathcal{C}f(z)=\kappa(\vartheta- z)f'(z)+\frac{\sigma^{2}}{2}zf''(z),
\end{equation*}
and invariant distribution $\text{Ga}(2\kappa\vartheta/\sigma^{2},2\kappa/\sigma^{2})$. Here, for simplicity of exposition, we have set $\alpha=2\kappa\vartheta/\sigma^{2}$, $\beta=2\kappa/\sigma^{2}$ and $\sigma^{2}=1$.


\subsection{Duality and propagation for multivariate CIR signals}\label{subsec: DW-inf-alleles}

Assume the signal $\ZZ_{t}=(Z_{1,t},\ldots,Z_{K,t})$ has independent CIR components $Z_{i,t}$ with generator \eqref{CIR generator}. The next proposition identifies the dual process for $\ZZ_{t}$.

\begin{theorem}\label{multi CIR duality}
Let $Z_{i,t}$, $i=1,\ldots,K$, be independent CIR processes each with generator \eqref{CIR generator} parametrised by $(\alpha_i,\beta)$, respectively. For $\aa\in\R_{+}^{K}$ and $\theta=|\aa|$, define $h_{\alpha_{i}}^{C}:\R_{+}\times\Z\times\R_{+}$ as
\begin{equation}\label{CIR duality function}\notag
h_{\alpha_{i}}^{C}(z,m,\ss)=
\frac{\Gamma(\alpha_{i})}{\Gamma(\alpha_{i}+m)}
\bigg(\frac{\beta+\ss}{\beta}\bigg)^{\alpha_{i}}
(\beta+\ss)^{m}z^{m}e^{-\ss z},
\end{equation}
$h^{W}:\R_{+}^{K}\times\Z^{K}$ as
\begin{equation}\label{WF duality function}\notag
h^{W}(\xx,\mm)=\frac{\Gamma(\na+|\mm|)}{\Gamma(\na)}\prod_{i=1}^{K}\frac{\Gamma(\alpha_{i})}{\Gamma(\alpha_{i}+m_{i})}\xx^{\mm},
\end{equation}
and $h:\R_{+}^{K}\times\Z^{K}\times\R_{+}$ as
\begin{equation}\label{multi cir duality function}\notag
h(\zz,\mm,\ss)=h_\theta^{C}(|\zz|,|\mm|,\ss)h^{W}(\xx,\mm),
\end{equation}
where $\xx=\zz/|\zz|$.
Then the joint process $\{(Z_{1,t},\ldots,Z_{K,t}),t\ge0\}$ is dual, in the sense of \eqref{duality identity}, to the process $\{(\mathbf{M}_{t},\SS_{t}),t\ge0\}\subset\Z^{K}\times\R_{+}$ with generator
\begin{equation}\label{K-CIR dual generator}
\Gn g(\mm,\ss)
  = \frac{1}{2}|\mm|(\beta+\ss) \sum_{i=1}^{K} \frac{m_i}{|\mm|}
[   g(\mm-\ee_{i},\ss)-g(\mm,\ss)]
-\frac{1}{2}\ss(\beta+\ss)   \frac{\partial g(\mm,\ss)}{\partial \ss}
\end{equation}
with respect to $h(\zz,\mm,\ss)$.
\end{theorem}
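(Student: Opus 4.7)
The plan is to verify the local duality sufficient condition \eqref{local duality}, namely that $(\A h(\cdot, \mm, \ss))(\zz) = (\Gn h(\zz, \cdot, \cdot))(\mm, \ss)$ for all $(\zz,\mm,\ss)$, where $\A = \sum_{i=1}^K \Gcir_i$ is the generator of the joint process with independent CIR coordinates. Once this local identity is established, the duality relation \eqref{duality identity} follows by standard martingale problem arguments as used, e.g., in \cite{PR14} and underlying the framework of Section~\ref{sec: filtering and duality}, modulo a check that $h$ lies in the relevant domains and that the natural integrability conditions hold (the exponential factor $e^{-\ss z}$ and polynomial dependence in $z$ keep things well-behaved since the $Z_{i,t}$ are nonnegative CIR processes with finite exponential moments).

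\textbf{Key factorisation.} The decisive observation is that the duality function has the product form
\begin{equation*}
h(\zz, \mm, \ss) = \prod_{i=1}^K h_{\alpha_i}^{C}(z_i, m_i, \ss).
\end{equation*}
Indeed, writing $\theta = |\aa|$, one has $(\beta+\ss)^\theta/\beta^\theta = \prod_i (\beta+\ss)^{\alpha_i}/\beta^{\alpha_i}$, and $|\zz|^{|\mm|}\xx^{\mm} = \zz^{\mm}$, so substituting the explicit forms of $h_\theta^C(|\zz|,|\mm|,\ss)$ and $h^W(\xx,\mm)$ and rearranging produces exactly the displayed product. This reduces the multivariate problem to the one-dimensional CIR duality already established in \cite{PR14}.

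\textbf{Reducing to component-wise duality.} I would then invoke the one-dimensional CIR duality from \cite{PR14}, which in present notation reads
\begin{equation*}
\Gcir_i h_{\alpha_i}^{C}(z_i, m_i, \ss) = \frac{m_i(\beta+\ss)}{2}\bigl[h_{\alpha_i}^{C}(z_i, m_i-1, \ss) - h_{\alpha_i}^{C}(z_i, m_i, \ss)\bigr] - \frac{\ss(\beta+\ss)}{2}\frac{\partial h_{\alpha_i}^{C}(z_i,m_i,\ss)}{\partial \ss}.
\end{equation*}
Since $\Gcir_i$ differentiates only in $z_i$, each summand of $\A h$ factors as $\prod_{j\neq i}h_{\alpha_j}^{C}(z_j,m_j,\ss)\cdot \Gcir_i h_{\alpha_i}^{C}(z_i,m_i,\ss)$. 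The death-type term contributes $\tfrac{m_i(\beta+\ss)}{2}[h(\zz,\mm-\ee_i,\ss)-h(\zz,\mm,\ss)]$ after multiplying back the inert factors; the derivative-in-$\ss$ term, summed over $i$, collapses via the product rule to $-\tfrac{\ss(\beta+\ss)}{2}\,\partial_\ss h(\zz,\mm,\ss)$. Summing over $i$ yields precisely $(\Gn h)(\zz,\mm,\ss)$ as given by \eqref{K-CIR dual generator}.

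\textbf{Main obstacle.} The computational content is essentially trivial once the factorisation of $h$ is spotted; the only subtle point is justifying that the local identity lifts to the global duality \eqref{duality identity}. I would address this as in \cite{PR14}: the dual $(\mathbf{M}_t,\SS_t)$ has a bounded state-space for $\mathbf{M}_t$ (death process starting from a fixed $\mm$) and an explicit, globally defined ODE for $\SS_t$ (with $\SS_t$ monotonically decreasing to $0$, preventing blow-up), so its martingale problem is well-posed; together with exponential integrability of the CIR coordinates $Z_{i,t}$, this yields the exchange of generators via Itô's formula and Fubini, producing the duality identity. Everything else is bookkeeping.
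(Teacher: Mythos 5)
Your proposal is correct, and it rests on the same pivot as the paper's proof: the factorisation $h(\zz,\mm,\ss)=\prod_{i=1}^{K}h^{C}_{\alpha_i}(z_i,m_i,\ss)$ (the paper's \eqref{composition duality function}), followed by verification of the local condition \eqref{local duality}. Where you diverge is in how the generator computation is organised. The paper expands $\Gcir_i h^C_{\alpha_i}$ from scratch, obtaining three terms including an up-shift $\frac{\ss}{2}(\alpha_i+m_i)h^C_{\alpha_i}(z_i,m_i+1,\ss)$, then separately computes $\partial_\ss h$ (which reintroduces exactly those $\mm+\ee_i$ shifts) and checks that $\Gn h$ reproduces the collected expression \eqref{gen for duality check}; the up-shift terms are thus carried through explicitly and cancelled at the end. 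You instead invoke the one-dimensional local duality identity as a black box and observe that the $\ss$-derivative contributions telescope by the product rule, $\sum_i \bigl(\prod_{j\ne i}h^C_{\alpha_j}\bigr)\partial_\ss h^C_{\alpha_i}=\partial_\ss h$, which lands on $\Gn h$ directly without any $\mm+\ee_i$ terms ever appearing. Your route is shorter and more modular — it shows that any product of one-dimensional duals sharing the same autonomous deterministic component assembles into a multivariate dual this way — while the paper's is self-contained. Two small caveats on your version: the one-dimensional identity you quote is correct (I checked: $\partial_\ss h^C = \frac{\alpha+m}{\beta+\ss}[h^C(z,m,\ss)-h^C(z,m+1,\ss)]$, and substituting recovers the paper's expansion of $\Gcir_i h^C_{\alpha_i}$), but it is not stated in \cite{PR14} in this parametrisation — there one has $(\delta,\gamma,\sigma^2)$ with dual component $\Theta_t=\beta+\SS_t$, cf.\ the translation $\alpha=\delta/2$, $\beta=\gamma/\sigma^2$ used in Lemma \ref{lemma: 1-CIR propagation} — so you should either cite it with the reparametrisation or verify it by the two-line computation above. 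Second, your explicit treatment of the local-to-global lifting (well-posedness of the dual's martingale problem, boundedness of the death component, monotone decay of $\SS_t$, exponential moments of CIR) is a point the paper leaves implicit by appealing to \eqref{local duality} within the framework of Section \ref{sec: filtering and duality}; making it explicit is sound and arguably an improvement.
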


\begin{proof}
Throughout the proof, for ease of notation we will write $h_i^C$ instead of $h_{\alpha_i}^C$. Note first that for all $\mm\in \Z^{K}$ we have
\begin{equation}\label{composition duality function}
\prod_{i=1}^{K}h_i^{C}(z_{i},m_{i},\ss)
=h_{\theta}^{C}(\norm{z},|\mm|,\ss)h^{W}(\xx,\mm),
\end{equation}
where $x_{i}=z_{i}/\norm{z}$,
which follows from direct computation by multiplying and dividing by the correct ratios of gamma functions and by writing $\prod_{i=1}^{K}z_{i}^{m_{i}}=|z|^{m}
\prod_{i=1}^{K}x_{i}^{m_{i}}$.
We show the result for $K=2$, from which the statement for general $K$ case follows easily.
From the independence of the CIR processes, the generator $(Z_{1,t},Z_{2,t})$ applied to the left hand side of \eqref{composition duality function} is
\begin{align}\label{2CIR generator}
(\Gcir_{1}+\Gcir_{2})h^{C}_{1}h^{C}_{2}
=&\,h^{C}_{2}\Gcir_{1}h^{C}_{1}+h^{C}_{1}\Gcir_{2}h^{C}_{2}.
\end{align}
A direct computation shows that
\begin{align*}
\Gcir_{i}h^{C}_{i}
=&\,
\frac{m_{i}}{2}(\beta+\ss)h^{C}_{i}(z_{i},m_{i}-1,\ss)
+\frac{\ss}{2}(\alpha_{i}+m_{i})h^{C}_{i}(z_{i},m_{i}+1,\ss)\notag\\
&\,-\frac{1}{2}(\ss(\alpha_{i}+m_{i})+m_{i}(\beta+\ss))h^{C}_{i}(z_{i},m_{i},\ss).
\end{align*}
Substituting in the right hand side of \eqref{2CIR generator} and collecting terms with the same coefficients gives
\begin{align*}
&\frac{\beta+\ss}{2}\Big[m_{1}h^{C}_{1}(z_{1},m_{1}-1,\ss)h^{C}_{2}(z_{2},m_{2},\ss)
 +m_{2}h^{C}_{1}(z_{1},m_{1},\ss)h^{C}_{2}(z_{2},m_{2}-1,\ss)\Big]\\
&\,
+ \frac{\ss}{2}\Big[(\alpha_{1}+m_{1})h^{C}_{1}(z_{1},m_{1}+1,\ss)h^{C}_{2}(z_{2},m_{2},\ss)
 +(\alpha_{2}+m_{2})h^{C}_{1}(z_{1},m_{1},\ss)h^{C}_{2}(z_{2},m_{2}+1,\ss)\Big]\\
&\,-\frac{1}{2}(\ss(\alpha+m)+m(\beta+\ss))h^{C}_{1}(z_{1},m_{1},\ss)h^{C}_{2}(z_{2},m_{2},\ss)\notag
\end{align*}
with $\alpha=\alpha_{1}+\alpha_{2}$ and $m=m_{1}+m_{2}$.
From \eqref{composition duality function} we now have
\begin{align}\notag
&\,
 \frac{\beta+\ss}{2}h_\theta^{C}(|z|,m-1,\ss)\Big[
 m_{1}h^{W}(\xx,\mm-\ee_{1},\ss)
 +m_{2}h^{W}(\xx,\mm-\ee_{2},\ss)\Big]\\
&\,
+ \frac{\ss}{2} h_\theta^{C}(|z|,m+1,\ss)\Big[(\alpha_{1}+m_{1})h^{W}(\xx,\mm+\ee_{1},\ss)+(\alpha_{2}+m_{2})h^{W}(\xx,\mm+\ee_{2},\ss)\Big]\notag\\
&\,-\frac{1}{2}(\ss(\alpha+m)+m(\beta+\ss))
h_\theta^{C}(|z|,m,\ss)h^{W}(\xx,\mm,\ss)\notag.
\end{align}
Then
\begin{align}\label{gen for duality check}
\begin{split}
(\Gcir_{1}+\Gcir_{2})h^{C}_{1}h^{C}_{2}=&\,
\frac{\beta+\ss}{2}\Big[m_{1}h(\zz,\mm-\ee_{1},\ss)+m_{2}h(\zz,\mm-\ee_{2},\ss)\Big]\\
&\,
+ \frac{\ss}{2}\Big[(\alpha_{1}+m_{1})h(\zz,\mm+\ee_{1},\ss)+(\alpha_{2}+m_{2})h(\zz,\mm+\ee_{2},\ss)\Big]\\
&\,-\frac{1}{2}(\ss(\alpha+m)+m(\beta+\ss))
h(\zz,\mm,\ss).
\end{split}\end{align}
Noting now that
\begin{align*}
\frac{\partial}{\partial \ss}h(\zz,\mm,\ss)
=&\,\frac{\alpha+m}{\beta+\ss}
h(\zz,\mm,\ss)
-\frac{\alpha_{1}+m_{1}}{\beta+\ss}h(\zz,\mm+\ee_{1},\ss)
-\frac{\alpha_{2}+m_{2}}{\beta+\ss}h(\zz,\mm+\ee_{2},\ss),
\end{align*}
an application of \eqref{K-CIR dual generator} on $h(\zz,\mm,\ss)$
shows that $(\Gn h(\zz,\cdot,\cdot))(\mm,\ss)$ equals the right hand side of \eqref{gen for duality check},
so that \eqref{local duality} holds, giving the result.
\end{proof}

The previous Theorem extends the gamma-type duality showed for one dimensional CIR processes in \cite{PR14}.
Here $\textbf{M}_{t}$ is a $K$-dimensional death process $\textbf{M}_{t}\subset \Z^{K}$ which, conditionally on $\SS_{t}$, jumps from $\mm$ to $\mm-\ee_{i}$ at rate $2m_{i}(\beta+\SS_{t})$, and $\SS_{t}\in \R_{+}$ is a nonnegative deterministic process driven by the logistic type differential equation
\begin{equation}\label{ODE}
\frac{\d \SS_{t}}{\d t}=-\frac{1}{2}\SS_{t}(\beta+\SS_{t}).
\end{equation}
The next Proposition formalises the propagation step for multivariate CIR processes. Denote by $\textbf{Ga}(\aa,\beta)$ the product of gamma distributions $\text{Ga}(\alpha_{1},\beta)\times\cdots\times\text{Ga}(\alpha_{K},\beta)$, with $\aa=(\alpha_{1},\ldots,\alpha_{K})$.


\begin{proposition}\label{K-CIR implications}
Let $\{(Z_{1,t},\ldots,Z_{K,t}),t\ge0\}$ be as in Theorem \ref{multi CIR duality}. Then
\begin{align}\label{multi CIR propagation}
\psi_{t}\big(&\mathbf{Ga}(\aa+\mm,\beta+\ss)\big)=\\
=&\,\sum_{i=0}^{|\mm|}\mathrm{Bin}(|\mm|-i;\,|\mm|,p(t))\mathrm{Ga}(\na+|\mm|-i,\beta+\SS_{t})\sum_{\substack{\oo \le \ii\le \mm, |\ii|=i}}p(\ii;\, \mm,i)
\pi_{\aa+\mm-\ii},\notag
\end{align}
where
\begin{equation}\label{bernoulli parameter}
p(t)=\frac{\beta}{(\beta+\ss)e^{\beta t/2}-\ss},\quad
\SS_{t}=\frac{\beta s}{(\beta+s)e^{\beta t/2}-s}, \quad \SS_{0}=\ss
\end{equation}
and $p(\ii;\, \mm,i)$ is as in \eqref{hypergeometric}.
\end{proposition}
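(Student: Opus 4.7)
The plan is to compute $\psi_{t}\bigl(\mathbf{Ga}(\aa+\mm,\beta+\ss)\bigr)$ by combining the reversibility of $\mathbf{Ga}(\aa,\beta)$ under the product CIR dynamics with the duality identity \eqref{duality identity} from Theorem \ref{multi CIR duality}, following the same blueprint used in the proof of Theorem \ref{prop: PR14-WF-infty}. The key preliminary observation is that, writing $\theta=|\aa|$, the density of $\mathbf{Ga}(\aa+\mm,\beta+\ss)$ with respect to $\mathbf{Ga}(\aa,\beta)$ equals $\prod_{i=1}^{K}h_{\alpha_{i}}^{C}(z_{i},m_{i},\ss)$, which by \eqref{composition duality function} is exactly $h(\zz,\mm,\ss)$. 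Applying reversibility and then duality gives
$$\psi_{t}\bigl(\mathbf{Ga}(\aa+\mm,\beta+\ss)\bigr)(\d\zz')=\mathbf{Ga}(\aa,\beta)(\d\zz')\,\E^{\zz'}[h(\ZZ_{t},\mm,\ss)]=\mathbf{Ga}(\aa,\beta)(\d\zz')\,\E^{(\mm,\ss)}[h(\zz',\mathbf{M}_{t},\SS_{t})],$$
so everything reduces to computing the joint law of $(\mathbf{M}_{t},\SS_{t})$.

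The deterministic $\SS_{t}$ solves the autonomous ODE \eqref{ODE}, and separation of variables via $\tfrac{1}{\SS(\beta+\SS)}=\tfrac{1}{\beta}(\tfrac{1}{\SS}-\tfrac{1}{\beta+\SS})$ yields the closed form in \eqref{bernoulli parameter}. For the death component, I would exploit that the coordinate-wise rate $\tfrac{1}{2}m_{i}(\beta+\SS_{t})$ makes $\mathbf{M}_{t}$ equivalent to $|\mm|$ exchangeable particles sharing the common inhomogeneous hazard $\tfrac{1}{2}(\beta+\SS_{t})$, with types labelled according to $\mm$. This representation yields $|\mathbf{M}_{t}|\sim\mathrm{Bin}(|\mm|,p(t))$ with $p(t)=\exp\bigl(-\int_{0}^{t}\tfrac{1}{2}(\beta+\SS_{u})\,\d u\bigr)$; using $\tfrac{d}{du}\log\SS_{u}=-\tfrac{1}{2}(\beta+\SS_{u})$, the integral collapses to $\log(\ss/\SS_{t})$, so $p(t)=\SS_{t}/\ss$, matching \eqref{bernoulli parameter} (with $\ss=0$ obtained by continuity). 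Moreover, since the embedded jump chain selects coordinate $j$ proportionally to $m_{j}$ independently of $\SS_{t}$, a standard sampling-without-replacement argument shows that, conditionally on $|\mm|-|\mathbf{M}_{t}|=i$, the decrement $\mm-\mathbf{M}_{t}$ is multivariate hypergeometric with parameters $(\mm,i)$, i.e.\ $p(\ii;\mm,i)$ as in \eqref{hypergeometric}.

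Finally, using the factorisation $h(\zz',\mathbf{M}_{t},\SS_{t})=h_{\theta}^{C}(|\zz'|,|\mathbf{M}_{t}|,\SS_{t})h^{W}(\xx',\mathbf{M}_{t})$ together with the conjugacy identities $h_{\theta}^{C}(\cdot,k,u)\,\mathrm{Ga}(\theta,\beta)=\mathrm{Ga}(\theta+k,\beta+u)$ and $h^{W}(\cdot,\nn)\,\pi_{\aa}=\pi_{\aa+\nn}$, each term of $\mathbf{Ga}(\aa,\beta)(\d\zz')\,\E^{(\mm,\ss)}[h(\zz',\mathbf{M}_{t},\SS_{t})]$ collapses into $\mathrm{Ga}(\theta+|\mm|-i,\beta+\SS_{t})\pi_{\aa+\mm-\ii}$ (the gamma-Dirichlet decomposition \eqref{gamma-dirichlet algebra} of $\mathbf{Ga}(\aa+\mm-\ii,\beta+\SS_{t})$), delivering \eqref{multi CIR propagation}. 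The main obstacle I anticipate is the explicit identification of the survival parameter $p(t)$: both integrating the ODE for $\SS_{t}$ and recognising $\int_{0}^{t}\tfrac{1}{2}(\beta+\SS_{u})\,\d u=\log(\ss/\SS_{t})$ are routine but not automatic, whereas the remainder of the argument is driven entirely by the duality framework already established in Theorem \ref{multi CIR duality}.
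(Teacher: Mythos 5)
Your proof is correct, but it follows a genuinely different route from the paper's. The paper never touches the duality identity in this proof: it factorises $\psi_{t}\big(\mathbf{Ga}(\aa+\mm,\beta+\ss)\big)$ by independence into $K$ one-dimensional propagations, invokes Lemma \ref{lemma: 1-CIR propagation} (the one-dimensional CIR filter inherited from \cite{PR14}) for each factor, and then recombines the product of binomial mixtures using the distributional identity that a product of binomials equals a binomial times a multivariate hypergeometric, finishing with \eqref{gamma-dirichlet algebra}; the formulas \eqref{bernoulli parameter} are justified by solving \eqref{ODE} and by an explicit iterated-integral computation of the death process transition probabilities. You instead work directly at the $K$-dimensional level, mirroring the proof of Theorem \ref{prop: PR14-WF-infty}: you identify $h(\zz,\mm,\ss)$ as the density of $\mathbf{Ga}(\aa+\mm,\beta+\ss)$ with respect to $\mathbf{Ga}(\aa,\beta)$ (correct, by \eqref{composition duality function}), apply reversibility and the duality of Theorem \ref{multi CIR duality}, and then compute the law of $(\mathbf{M}_{t},\SS_{t})$ via the exchangeable-particle representation of the death process — which delivers the binomial survival count and the hypergeometric allocation in one probabilistic stroke, rather than through the paper's combinatorial identity and iterated integrals. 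Your observation that $\tfrac{\d}{\d u}\log \SS_{u}=-\tfrac12(\beta+\SS_{u})$, so that $p(t)=\SS_{t}/\ss$, is a cleaner derivation of \eqref{bernoulli parameter} than the paper's direct integration (with the $\ss=0$ case by continuity, as you note); also, you correctly read the per-coordinate rate $\tfrac12 m_{i}(\beta+\SS_{t})$ off the generator \eqref{K-CIR dual generator}. What each approach buys: yours is conceptually unified with the Wright--Fisher case and makes the binomial-hypergeometric structure of the mixture transparent as the law of the dual, while the paper's avoids relying on the global duality identity \eqref{duality identity} for the multivariate process (Theorem \ref{multi CIR duality} only verifies the local condition \eqref{local duality}, and passing from local to global needs mild regularity that the paper glosses) by leaning entirely on the already-established one-dimensional result. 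Since the paper itself asserts duality ``in the sense of \eqref{duality identity}'', your use of it is consistent with the paper's own framing, and I see no gap.
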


\begin{proof}
From independence we have
\begin{align*}
\psi_{t}\big(\mathbf{Ga}(\aa+\mm,\beta+\ss)\big)
=&\,
\prod_{i=1}^{K}\psi_{t}\big(\mathrm{Ga}(\alpha_{i}+m_{i},\beta+\ss)\big).
\end{align*}
Using Lemma \ref{lemma: 1-CIR propagation} in the Appendix, the previous equals
\begin{align*}
\prod_{i=1}^{K}&\sum_{j=0}^{m_{i}}\mathrm{Bin}(m_{i}-j;\,m_{i},p(t))
\mathrm{Ga}(\alpha_{i}+m_{i}-j,\beta+\SS_{t})\\
=&\,
\sum_{i_{1}=0}^{m_{1}}\mathrm{Bin}(m_{1}-i_{1};\,m_{1},p(t))
\mathrm{Ga}(\alpha_{1}+m_{1}-i_{1},\beta+\SS_{t})\\
&\,\times\cdots\times
\sum_{i_{K}=0}^{m_{K}}\mathrm{Bin}(m_{K}-i_{K};\,m_{K},p(t))
\mathrm{Ga}(\alpha_{K}+m_{K}-i_{K},\beta+\SS_{t}).
\end{align*}
Using now the fact that a product of Binomials equals the product of a Binomial and an hypergeometric distribution, we have
\begin{equation*}
\sum_{i=0}^{|\mm|}\mathrm{Bin}(|\mm|-i;\,|\mm|,p(t))\sum_{\substack{\oo \le \ii\le \mm, |\ii|=i}}p(\ii;\, \mm,i)
\prod_{j=1}^{K}\mathrm{Ga}(\alpha_{j}+m_{j}-i_{j},\beta+\SS_{t})
\end{equation*}
which, using \eqref{gamma-dirichlet algebra}, yields \eqref{multi CIR propagation}. Furthermore, \eqref{bernoulli parameter} is obtained by solving \eqref{ODE} and by means of the following argument. The one dimensional death process that drives $|\textbf{M}_{t}|$ in Theorem \ref{multi CIR duality}, jumps from $|\mm|$ to $|\mm|-1$ at rate $|\mm|(\beta+S_{t})/2$, see \eqref{K-CIR dual generator}. Hence the probability of this event not occurring in $[0,t]$ is
\begin{equation*}
P(|\mm|\mid |\mm|,S_{t})=\exp\bigg\{-\frac{|\mm|}{2}\int_{0}^{t}(\beta+S_{u})\d u\bigg\}=\left(\frac{\beta}{(\beta+\ss)e^{\beta t/2}-\ss}\right)^{|\mm|}.
\end{equation*}
The probability of a jump from $|\mm|$ to $|\mm|-1$ occurring in $[0,t]$ is
\begin{align*}
P(|\mm|-1&\mid |\mm|,S_{t})\\
=&\,\int_{0}^{t}
\exp\bigg\{-\frac{|\mm|}{2}\int_{0}^{s}(\beta+S_{u})\d u\bigg\}
\frac{|\mm|}{2}S_{s}
\exp\bigg\{-\frac{|\mm|-1}{2}\int_{s}^{t}(\beta+S_{u})\d u\bigg\}\d t\\
=&\,\frac{|\mm|}{2}
\exp\bigg\{-\frac{|\mm|}{2}\int_{0}^{t}(\beta+S_{u})\d u\bigg\}
\int_{0}^{t}S_{s}
\exp\bigg\{\bigg(\frac{|\mm|}{2}-\frac{|\mm|-1}{2}\bigg)\int_{s}^{t}(\beta+S_{u})\d u\bigg\}\d t\\
=&\,|\mm|
\exp\bigg\{-\frac{|\mm|}{2}\int_{0}^{t}(\beta+S_{u})\d u\bigg\}
\bigg(1-\exp\bigg\{\bigg(\frac{|\mm|}{2}-\frac{|\mm|-1}{2}\bigg)\int_{0}^{t}(\beta+S_{u})\d u\bigg\}\bigg)\\
=&\,|\mm|
\bigg(\exp\bigg\{-\frac{|\mm|}{2}\int_{0}^{t}(\beta+S_{u})\d u\bigg\}-\exp\bigg\{-\frac{|\mm|-1}{2}\int_{0}^{t}(\beta+S_{u})\d u\bigg\}\bigg)\\
=&\,|\mm|\left(\frac{\beta}{(\beta+\ss)e^{\beta t/2}-\ss}\right)^{|\mm|-1}\left(1-\frac{\beta}{(\beta+\ss)e^{\beta t/2}-\ss}\right).
\end{align*}
Iterating the argument leads to conclude that the death process jumps from $|\mm|$ to $|\mm|-i$ in $[0,t]$ with probability $\text{Bin}(|\mm|-i\mid |\mm|,p(t))$.
\end{proof}

Note that when $s\in\N$, $\text{Ga}(\alpha_{i}+m,\beta+\ss)$ is the posterior distribution of a parameter with $\text{Ga}(\alpha_{i},\beta)$ distribution, given $s$ Poisson observations whose sum is $m$. Hence the dual component $M_{i,t}$ is interpreted as the sum of the observed values of type $i$, and $\SS_{t}\subset\R_{+}$ as a continuous version of the sample size.


\subsection{Filtering Dawson--Watanabe processes}\label{subsec: DW-DW}

Let now the signal $Z_{t}$ follow a DW process with generator \eqref{DW-generator}, and let $\Gamma^{\beta}_{\alpha}$ be the law of a gamma random measure, defined in \eqref{gamma process series}.
The following result provides the propagation step for filtering the DW process.

\begin{theorem}\label{prop: DW propagation}
Let $p(t)$ and $\SS_{t}$ be as in \eqref{bernoulli parameter} and $p(\nn;\,\mm,|\nn|)$ as in \eqref{hypergeometric}. Then
\begin{equation}\label{gamma propagation}
\psi_{t}\Big(\Gamma^{\beta+\ss}_{\alpha+\sum_{i=1}^{K_{m}}m_{i}\delta_{y_{i}^{*}}}\Big)=
\sum_{\nn\in G(\mm)}
\tilde p_{\mm,\nn}(t)
\Gamma^{\beta+\SS_{t}}_{\alpha+\sum_{i=1}^{K_{m}}n_{i}\delta_{y_{i}^{*}}},
\end{equation}
where
\begin{equation}\label{tilde pmn}
\tilde p_{\mm,\nn}(t)=\mathrm{Bin}(|\mm|-|\nn|;\,|\mm|,p(t))p(\nn;\, \mm,|\nn|),
\end{equation}
and $G(M)$ is as in \eqref{G(M)}.
\end{theorem}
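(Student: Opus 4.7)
The plan is to mirror the strategy used in the proof of Theorem \ref{prop: FV propagation}: propagate through the finite-dimensional projections of the signal, which in the Dawson--Watanabe case are multivariate CIR processes treated in Proposition \ref{K-CIR implications}, and then lift the identity back to random measures by invoking the fact that a gamma random measure is characterised by its finite-dimensional distributions.

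First I would fix an arbitrary measurable partition $(A_{1},\ldots,A_{K})$ of $\Y$ and let $\tilde\mm=(\tilde m_{1},\ldots,\tilde m_{K})$ denote the multiplicities obtained by binning the atoms $y_{1}^{*},\ldots,y_{K_{m}}^{*}$ into the classes. The projection property of gamma random measures recalled in Section \ref{subsec: DW-static} yields
\begin{equation*}
\Gamma^{\beta+\ss}_{\alpha+\sum_{i=1}^{K_{m}}m_{i}\delta_{y_{i}^{*}}}(A_{1},\ldots,A_{K}) \sim \mathbf{Ga}(\aa+\tilde\mm,\beta+\ss),
\end{equation*}
with $\aa=(\alpha(A_{1}),\ldots,\alpha(A_{K}))$. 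Since the projection of $Z_{t}$ onto $(A_{1},\ldots,A_{K})$ consists of independent CIR components (Section \ref{subsec: DW-model}), Proposition \ref{K-CIR implications} applies and gives, after observing that $|\tilde\mm|=|\mm|$,
\begin{equation*}
\psi_{t}\big(\mathbf{Ga}(\aa+\tilde\mm,\beta+\ss)\big)
=\sum_{i=0}^{|\mm|}\mathrm{Bin}(|\mm|-i;\,|\mm|,p(t))\,\mathrm{Ga}(\theta+|\mm|-i,\beta+\SS_{t})\!\!\!\sum_{\substack{\oo\le\ii\le\tilde\mm\\ |\ii|=i}}\!p(\ii;\,\tilde\mm,i)\,\pi_{\aa+\tilde\mm-\ii}.
\end{equation*}

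Next I would compare this with the projection of the right-hand side of \eqref{gamma propagation} onto the same partition. Each term $\Gamma^{\beta+\SS_{t}}_{\alpha+\sum n_{i}\delta_{y_{i}^{*}}}$ projects to $\mathbf{Ga}(\aa+\tilde\nn,\beta+\SS_{t})$, which by \eqref{gamma-dirichlet algebra} factorises into an independent $\mathrm{Ga}(\theta+|\nn|,\beta+\SS_{t})$ total-mass component and a Dirichlet $\pi_{\aa+\tilde\nn}$ allocation. Grouping the terms $\nn\in G(\mm)$ according to their projected values $\tilde\nn$, and noting that the Binomial factor in $\tilde p_{\mm,\nn}(t)$ depends only on $|\nn|=|\tilde\nn|$, the required matching reduces to the hypergeometric marginalisation identity \eqref{hypergeometric marginal}, already used in Theorem \ref{prop: FV propagation}:
\begin{equation*}
\sum_{\substack{\nn\in G(\mm)\\ \tilde\nn=\jj}}p(\nn;\,\mm,|\nn|)=p(\jj;\,\tilde\mm,|\jj|).
\end{equation*}
This shows that both sides of \eqref{gamma propagation} have the same projection onto every finite partition of $\Y$, and since a gamma random measure is determined by its finite-dimensional distributions, the equality of random measures follows.

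The main technical step is the bookkeeping in the middle paragraph: one has to recognise the factorisation of the right-hand side into a Binomial (for the total number of deaths), a hypergeometric (for allocating deaths among types), and the gamma/Dirichlet split provided by \eqref{gamma-dirichlet algebra}, and then verify that these three factors align precisely with the three factors produced by Proposition \ref{K-CIR implications}. Everything else is either a direct application of earlier results (projection of $\Gamma^{\beta}_{\alpha}$, CIR marginalisation of DW, the logistic ODE \eqref{ODE}) or a transcription of the consistency argument already carried out in the Fleming--Viot proof.
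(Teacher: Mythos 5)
Your proposal is correct and follows essentially the same route as the paper's proof: project onto an arbitrary partition, apply Proposition \ref{K-CIR implications} to the resulting independent CIR components, rewrite the right-hand side of \eqref{gamma propagation} via the gamma--Dirichlet factorisation \eqref{gamma-dirichlet algebra}, and reduce the matching of mixture weights to the hypergeometric merging consistency \eqref{hypergeometric marginal}, concluding by the characterisation through finite-dimensional projections. The only cosmetic difference is that you group the surviving multiplicities $\nn$ by their projections $\tilde\nn$ explicitly, whereas the paper phrases the same step as consistency of the inner hypergeometric sum; the substance is identical.
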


\begin{proof}
Fix a partition $(A_{1},\ldots,A_{K})$ of $\Y$. Then by Proposition \ref{K-CIR implications}
\begin{align*}
\psi_{t}\Big(&\Gamma^{\beta+\ss}_{\alpha+\sum_{i=1}^{K_{m}}m_{i}\delta_{y_{i}^{*}}}(A_{1},\ldots,A_{K})\Big)\\
=&\,\sum_{i=0}^{|\mm|}\mathrm{Bin}(|\mm|-i;\,|\mm|,p(t))\mathrm{Ga}(\na+|\mm|-i,\beta+\SS_{t})\sum_{\substack{\oo \le \ii\le \tilde\mm, |\ii|=i}}p(\ii;\, \tilde\mm,i) \pi_{\aa+\tilde\mm-\ii},
\end{align*}
where $\Gamma^{\beta+\ss}_{\alpha+\sum_{i=1}^{K_{m}}m_{i}\delta_{y_{i}^{*}}}(A_{1},\ldots,A_{K})$ denotes $\Gamma^{\beta+\ss}_{\alpha+\sum_{i=1}^{K_{m}}m_{i}\delta_{y_{i}^{*}}}(\cdot)$ evaluated on $(A_{1},\ldots,A_{K})$ and $\tilde\mm$ are the multiplicities yielded by the projection of $\mm$ onto $(A_{1},\ldots,A_{K})$.
Use now \eqref{gamma-dirichlet algebra} and \eqref{tilde pmn} to write the right hand side of \eqref{gamma propagation} as
\begin{align}\notag
\sum_{\nn\in G(\mm)}&
\tilde p_{\mm,\nn}(t)
\Gamma^{\beta+\SS_{t}}_{\alpha+\sum_{i=1}^{K_{m}}n_{i}\delta_{y_{i}^{*}}}\\
=&\,\sum_{i=0}^{|\mm|}\mathrm{Bin}(|\mm|-i;\,|\mm|,p(t))\text{Ga}(\na+|\mm|-i,\beta+\SS_{t})\sum_{\substack{\oo \le \nn\le \mm, |\nn|=i}}p(\nn;\, \mm,i)
\Pi_{\alpha+\sum_{j=1}^{K_{m}}(m_{j}-n_{j})\delta_{y_{j}^{*}}}.\notag
\end{align}
Since the inner sum is the only term which depends on multiplicities and  Since Dirichlet processes are characterised by their finite-dimensional projections, we are only left to show that
\begin{equation*}
\sum_{\substack{\oo \le \nn\le \mm, |\nn|=i}}p(\nn;\, \mm,i)
\Pi_{\alpha+\sum_{j=1}^{K_{m}}(m_{j}-n_{j})\delta_{y_{j}^{*}}}(A_{1},\ldots,A_{K})=\sum_{\substack{\oo \le \ii\le \tilde\mm, |\ii|=i}}p(\ii;\, \tilde\mm,i) \pi_{\aa+\tilde\mm-\ii}
\end{equation*}
which, in view of \eqref{DP to dir}, holds if
\begin{equation*}
\sum_{\substack{\oo \le \nn\le \mm: \tilde\nn=\ii}}p(\ii;\, \mm,i)
=p(\ii;\, \tilde\mm,i),
\end{equation*}
where $\tilde\nn$ denotes the projection of $\nn$ onto $(A_{1},,\ldots,A_{K})$.
This is the consistency with respect to merging of classes of the multivariate hypergeometric distribution, and so the result now follows by the same argument at the end of the proof of Theorem \ref{prop: FV propagation}.
%
\end{proof}

The next proposition provides the update step for mixtures of gamma priors.

\begin{proposition}\label{prop: DW update}
Let $y_{m+i}\mid z\sim z$, $i=1,\ldots,n$, with
\begin{equation*}
z\sim \sum_{\mm \in M}w_{\mm}\Gamma^{\beta+\ss}_{\alpha+\sum_{i=1}^{K_{m}}m_{i}\delta_{y_{i}^{*}}}, \quad \sum_{\mm\in M}w_\mm=1.
\end{equation*}
Then
\begin{align}\label{eq: NP-update}
\phi_{\yy_{m+1:m+n}}\Bigg(\sum_{\mm \in M}w_{\mm}\Gamma^{\beta+\ss}_{\alpha+\sum_{i=1}^{K_{m}}m_{i}\delta_{y_{i}^{*}}}\Bigg)
=
\sum_{\nn\in t(\yy_{m+1:m+n},M)}\hat w_{\nn}
\Gamma^{\beta+\ss+n-m}_{\alpha+\sum_{i=1}^{K_{m+n}}n_{i}\delta_{y_{i}^{*}}},
\end{align}
with $t(\cdot)$ is as in Proposition \ref{algorithm infinite alleles} and $\hat w_{\nn}$ as in \eqref{updated mixture weights}.
\end{proposition}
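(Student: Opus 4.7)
The plan is to reduce the update of the full mixture to component-wise updates via linearity, identify each componentwise posterior through the gamma--Poisson conjugacy \eqref{gamma conjugacy}, and then compute the mixture weights via the gamma--Dirichlet decomposition \eqref{gamma-dirichlet algebra}, reusing the Pólya urn computation already established in Lemma \ref{prop: FV update multi}.

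First I would apply \eqref{update quasi linearity} to the prior mixture, which reduces the problem to computing the componentwise posteriors $\phi_{\yy_{m+1:m+n}}(\nu_{\mm})$ and the marginal likelihoods $p_{\nu_\mm}(\yy_{m+1:m+n})$ for each $\nu_\mm := \Gamma^{\beta+s}_{\alpha+\sum_i m_i\delta_{y_i^*}}$; the $\hat w_\nn$ will be proportional to $w_\mm p_{\nu_\mm}(\yy_{m+1:m+n})$. For each such component, iterating the single-observation conjugacy \eqref{gamma conjugacy} over $y_{m+1},\ldots,y_{m+n}$ produces a gamma random measure of the form $\Gamma^{\beta+s+n}_{\alpha+\sum_i n_i\delta_{y_i^*}}$ with $\nn=t(\yy_{m+1:m+n},\mm)$, giving the parametric form claimed in the statement (modulo the rate-parameter bookkeeping in the exponent, which is dictated by the total-count update of \eqref{gamma conjugacy}).

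The marginal likelihood $p_{\nu_\mm}(\yy_{m+1:m+n})$ is then computed by writing $z=|z|\,x$ via \eqref{gamma-dirichlet algebra}, so that under $\nu_\mm$ the factors $|z|\sim \mathrm{Ga}(\theta+|\mm|,\beta+s)$ and $x\sim \Pi_{\alpha+\sum_i m_i\delta_{y_i^*}}$ are independent. The Poisson point process likelihood factorises accordingly into a term depending only on $|z|$ and the new sample size, and a term depending only on $x$ and the new locations: the former integrates in closed form against the gamma density, while the latter, upon integration against $\Pi_{\alpha+\sum_i m_i\delta_{y_i^*}}$, coincides with the Pólya urn predictive $\mathrm{PU}_\alpha(\yy_{m+1:m+n}\mid \yy_{1:m})$ by exactly the same computation used in Lemma \ref{prop: FV update multi} via \cite{A74}.

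The main obstacle I anticipate is the last step: the $|\mm|$-dependent factor arising from the total-mass integral must be carefully reconciled with the exponents already implicit in $\mathrm{PU}_\alpha$, so as to verify that after normalising over $M$ the mixture weights collapse to the form \eqref{updated mixture weights}. Once this bookkeeping is carried out, combining the linearity reduction, the componentwise conjugacy, and the closed-form marginal likelihood yields the statement.
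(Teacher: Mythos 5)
Correct, and essentially the paper's own argument: componentwise gamma--Poisson conjugacy \eqref{gamma conjugacy}, the independence factorisation $z=|z|\,x$ from \eqref{gamma-dirichlet algebra}, and the P\'olya-urn weight identification borrowed from Lemma \ref{prop: FV update multi}, with your explicit appeal to \eqref{update quasi linearity} merely making visible the mixture-linearity step the paper leaves implicit. The bookkeeping you flag as the main obstacle dissolves at once: within this proposition every mixture component shares the same total multiplicity $|\mm|=m$ and the same $\ss$, so the total-mass part of the marginal likelihood, an integral of the form $\E\big[e^{-|z|}|z|^{n}\big]$ against $\mathrm{Ga}(\theta+m,\beta+\ss)$, depends on the component only through $m$ and $\ss$, hence is constant over $M$ and cancels upon normalisation, leaving exactly \eqref{updated mixture weights}; as for the exponent, the discrepancy between the stated $\beta+\ss+n-m$ and your $\beta+\ss+n$ is an indexing inconsistency internal to the paper, whose proof in fact treats the new sample as $\yy_{m+1:n}$ of size $n-m$, with $\nn$ the multiplicities of the full sample $\yy_{1:n}$.
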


\begin{proof}
Since $z_{\mm}:=(z\mid \mm)\sim\Gamma^{\beta+\ss}_{\alpha+\sum_{i=1}^{K_{m}}m_{i}\delta_{y_{i}^{*}}}$, from \eqref{poisson data} we have
\begin{equation}\notag
y_{m+1},\ldots,y_{n}\mid z,\mm,n\overset{iid}{\sim}z_{\mm}/|z_{\mm}|,\quad \quad
n\mid z_{\mm}\sim\text{Po}(|z_{\mm}|).
\end{equation}
Using \eqref{gamma-dirichlet algebra} we have
\begin{align*}
\Gamma^{\beta+\ss}_{\alpha+\sum_{i=1}^{K_{m}}m_{i}\delta_{y_{i}^{*}}}
=
\text{Ga}(\theta+|\mm|,\beta+s)\Pi_{\alpha+\sum_{i=1}^{K_{m}}m_{i}\delta_{y_{i}^{*}}},
\end{align*}
that is $|z_{\mm}|$ and $z_{\mm}/|z_{\mm}|$ are independent with $\text{Ga}(\theta+|\mm|,\beta+s)$ and $\Pi_{\alpha+\sum_{i=1}^{K_{m}}m_{i}\delta_{y_{i}^{*}}}$ distribution respectively. \eqref{gamma conjugacy} now implies that
\begin{equation*}
z_{\mm}\mid \yy_{m+1:m+n}\sim \text{Ga}(\theta+|\nn|,\beta+s+n-m)\Pi_{\alpha+\sum_{i=1}^{K_{m+n}}n_{i}\delta_{y_{i}^{*}}}=
\Gamma^{\beta+\ss+n-m}_{\alpha+\sum_{i=1}^{K_{m+n}}n_{i}\delta_{y_{i}^{*}}}
\end{equation*}
where $\nn$ are the multiplicities of the distinct values in $\yy_{1:n}$.
Finally, by the independence of $|z_{\mm}|$ and $z_{\mm}/|z_{\mm}|$, the conditional distribution of the mixing measure follows by the same argument used in Proposition \ref{prop: FV update multi}.
\end{proof}

The successive iteration of the update and prediction step given by Theorem \ref{prop: DW propagation} and Proposition \ref{prop: DW update} provides the filter for DW signals with data as in \eqref{poisson data}. This is summarised in the next Proposition.

\begin{proposition}\label{algorithm DW}
Consider the family of finite mixtures of gamma random measures
\begin{equation*}
\Fgamma =
\Bigg\{\sum_{\mm\in M}w_{\mm}\Gamma^{\beta+\ss}_{\alpha+\sum_{i=1}^{K_{m}}m_{i}\delta_{y_{i}^{*}}}:\, s>0,\, M \subset \M,\ |M|<\infty,\, w_{\mm}\ge0,\  \sum_{\mm\in M}w_\mm=1\Bigg\},
\end{equation*}
with $\M$ as in \eqref{set of multiplicities}.
Then, when $Z_{t}$ has generator \eqref{DW-generator} and data are as in \eqref{poisson data}, $\Fgamma$ is closed under the application of the update and prediction operators \eqref{update operator} and \eqref{prediction operator}. Specifically,
\begin{equation}\label{DW algorithm update}
\phi_{\yy_{m+1:m+n}}\Bigg(\sum_{\mm \in M}w_{\mm}\Gamma^{\beta+\ss}_{\alpha+\sum_{i=1}^{K_{m}}m_{i}\delta_{y_{i}^{*}}}\Bigg)
=
\sum_{\nn\in t(\yy_{m+1:m+n},M)}\hat w_{\nn}
\Gamma^{\beta+\ss+n-m}_{\alpha+\sum_{i=1}^{K_{m+n}}n_{i}\delta_{y_{i}^{*}}},
\end{equation}
with
\begin{equation}
  \label{eq:update-mix FV}\notag
  \begin{split}
t(\yy,\Lambda) :=&\, \{\nn: \nn=t(\yy,\mm), \mm \in \Lambda \} \\
\hat w_{\nn}\propto&\,
w_{\mm}\,\mathrm{PU}_{\alpha}(\yy_{m+1:m+n}\mid \yy_{1:m})
   \quad   \textrm{for}\quad  \nn = t(\yy,\mm) \,,\sum_{\nn
      \in t(\yy,\Lambda)} \hw_\nn =1\,,
  \end{split}
\end{equation}
and
\begin{equation*}\label{DW algorithm prediction}
\psi_{t}\bigg(\sum_{\mm\in M}w_{\mm}\Gamma^{\beta+\ss}_{\alpha+\sum_{i=1}^{K_{m}}m_{i}\delta_{y_{i}^{*}}}\bigg)=
\sum_{\nn\in G(M)}
\bigg(\sum_{\mm\in M,\, \mm\ge \nn}w_{\mm}\tilde p_{\mm,\nn}(t)\bigg)
\Gamma^{\beta+\SS_{t}}_{\alpha+\sum_{i=1}^{K_{m}}n_{i}\delta_{y_{i}^{*}}}.
\end{equation*}
\end{proposition}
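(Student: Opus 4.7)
The plan is to deduce both parts of the statement from the single-measure results already established, combined with the linearity (respectively quasi-linearity) of the prediction and update operators, exactly mirroring the structure of the proof of Proposition \ref{algorithm FV}.

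For the update equation \eqref{DW algorithm update}, the argument is essentially immediate: Proposition \ref{prop: DW update} is stated already at the level of a finite mixture of gamma random measures and yields precisely the claimed expression. Concretely, I would invoke the quasi-linearity identity \eqref{update quasi linearity} to write
\begin{equation*}
\phi_{\yy_{m+1:m+n}}\bigg(\sum_{\mm\in M}w_{\mm}\Gamma^{\beta+\ss}_{\alpha+\sum_{i=1}^{K_{m}}m_{i}\delta_{y_{i}^{*}}}\bigg)
=\sum_{\mm\in M}\frac{w_{\mm}\,p_{\nu_{\mm}}(\yy_{m+1:m+n})}{\sum_{\mm'} w_{\mm'} p_{\nu_{\mm'}}(\yy_{m+1:m+n})}\,\phi_{\yy_{m+1:m+n}}(\nu_{\mm}),
\end{equation*}
with $\nu_{\mm}=\Gamma^{\beta+\ss}_{\alpha+\sum_{i}m_{i}\delta_{y_{i}^{*}}}$. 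The single-component update $\phi_{\yy_{m+1:m+n}}(\nu_{\mm})=\Gamma^{\beta+\ss+n-m}_{\alpha+\sum_{i}n_{i}\delta_{y_{i}^{*}}}$ follows from combining the decomposition \eqref{gamma-dirichlet algebra}, the Poisson-gamma conjugacy \eqref{gamma conjugacy} and the Polya urn identification of the marginal likelihood, which are the very ingredients behind Proposition \ref{prop: DW update}; reading off the normalising constants gives the Polya urn weights \eqref{updated mixture weights}.

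For the prediction equation, I would exploit the linearity \eqref{prediction linearity} of $\psi_{t}$ to push the operator inside the mixture, then apply Theorem \ref{prop: DW propagation} to each component, obtaining
\begin{equation*}
\psi_{t}\bigg(\sum_{\mm\in M}w_{\mm}\Gamma^{\beta+\ss}_{\alpha+\sum_{i=1}^{K_{m}}m_{i}\delta_{y_{i}^{*}}}\bigg)
=\sum_{\mm\in M}w_{\mm}\sum_{\nn\in G(\mm)}\tilde p_{\mm,\nn}(t)\,\Gamma^{\beta+\SS_{t}}_{\alpha+\sum_{i=1}^{K_{m}}n_{i}\delta_{y_{i}^{*}}}.
\end{equation*}
The final step is a bookkeeping exchange of the two sums: a given $\nn$ contributes precisely to those $\mm\in M$ with $\mm\ge \nn$ componentwise, and the set of such $\nn$ ranging over all $\mm\in M$ is by definition $G(M)$ in \eqref{G(M)}. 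Collecting the coefficients of $\Gamma^{\beta+\SS_{t}}_{\alpha+\sum_{i}n_{i}\delta_{y_{i}^{*}}}$ produces the inner sum $\sum_{\mm\in M,\,\mm\ge\nn}w_{\mm}\tilde p_{\mm,\nn}(t)$.

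I do not expect any real obstacle: the argument is structurally identical to the one used for $\Fdir$ in Proposition \ref{algorithm FV}, with $p_{\mm,\nn}(t)$ replaced by $\tilde p_{\mm,\nn}(t)$ from \eqref{tilde pmn} and the extra bookkeeping of the rate parameter $\beta+\ss\mapsto\beta+\SS_{t}$ handled by the deterministic flow \eqref{bernoulli parameter}. The only minor care point is that different $\mm$'s in $M$ may be supported on different distinct points $y_{i}^{*}$, so the indexing of $G(M)$ must be understood as the set of multiplicity vectors compatible with some $\mm\in M$; this is already implicit in the definition \eqref{G(M)} and requires no additional work.
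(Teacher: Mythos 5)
Your proposal is correct and follows essentially the same route as the paper's own proof: the update is exactly the content of Proposition \ref{prop: DW update} (which itself rests on the gamma--Dirichlet decomposition \eqref{gamma-dirichlet algebra}, the conjugacy \eqref{gamma conjugacy} and the P\'olya urn weights via Lemma \ref{prop: FV update multi}), while the prediction is obtained from Theorem \ref{prop: DW propagation} together with the linearity \eqref{prediction linearity} and the same exchange of sums yielding the $G(M)$-indexed mixture. The additional details you spell out---the quasi-linearity identity \eqref{update quasi linearity} behind the mixture weights and the remark that different $\mm\in M$ may carry different atoms $y_{i}^{*}$---are consistent with, and slightly more explicit than, the published argument.
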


\begin{proof}
The update operation \eqref{DW algorithm update} follows directly from Lemma \ref{prop: FV update multi}.
The prediction operation \eqref{eq: FV algorithm propagation} for elements of $\F_{\Pi}$ follows from Theorem \ref{prop: DW propagation} together with \eqref{prediction linearity} and a rearrangement of the sums, so that
\begin{align*}
\psi_{t}\Bigg(\sum_{\mm\in M}w_{\mm}\Gamma^{\beta+\ss}_{\alpha+\sum_{i=1}^{K_{m}}m_{i}\delta_{y_{i}^{*}}}\Bigg)
=&\,\sum_{\mm\in M}w_{\mm}
\sum_{\substack{\nn\in G(\mm)} }
p_{\mm,\nn }(t)
\Gamma^{\beta+\SS_{t}}_{\alpha+\sum_{i=1}^{K_{m}}m_{i}\delta_{y_{i}^{*}}}\\
=&\,\sum_{\nn\in G(M)}
\Bigg(\sum_{\mm\in M,\,\mm\ge \nn}w_{\mm}p_{\mm,\nn}(t)\Bigg)
\Gamma^{\beta+\SS_{t}}_{\alpha+\sum_{i=1}^{K_{m}}m_{i}\delta_{y_{i}^{*}}}.
\end{align*}
\end{proof}

\section{Interpretations and concluding remarks}\label{sec: discussion}

We have derived explicit filters that allow sequential evaluation of the marginal posterior distributions of hidden FV and DW signals given appropriate observations. The algorithms provided in Propositions \ref{algorithm FV} and \ref{algorithm DW}, which extend previous results on conjugacy of mixtures of Dirichlet and gamma priors \citep{A74,L82}, can be interpreted as follows. Without observations, our knowledge of the signal amounts to the prior distribution, i.e., the law of the FV or DW process, which provides the instant-wise information encoded in the stationary distributions $\Pi_{\alpha}$ and $\Gamma^{\beta}_{\alpha}$ respectively. When new observations become available, this new information is integrated into our current knowledge by means of the update operator, which modifies the prior parameters according to the atoms observed in the sample, yielding $\beta+m$ for the DW case and $\alpha+\sum_{i=1}^{K_{m}}m_{i}\delta_{y_{i}^{*}}$ for both. In the following interval of time, before furthers observations become available, the distribution of the signals becomes a mixture that gradually approaches the ergodic distribution. This implies that the signal progressively forgets the previously acquired information, which becomes obsolete and gradually swamped by the prior. This mechanisms is enforced by a time-continuous modification of the mixture weights, which modulate the sequential random removal of the atoms previously added to the base measure, governed by the death process. In addition, in the DW case a deterministic process governs the sample size parameter, which increases by jumps in the update step but decreases continuously in the propagation, gradually bringing it back to its ergodic state.
The dual process, which is related to the time reversal structure of the signal, is thus able to isolate the prior knowledge on the signal from the posterior information acquired with the observations,  and dictates how this information is to be dealt with for filtering.

For what concerns the strategy followed for proving the propagation result in Theorems \ref{prop: FV propagation} and \ref{prop: DW propagation}, one could be tempted to work directly
with the duals of the FV and DW processes (see \citealp{DH82,EK93,E00}). However, this is not optimal, due to the high degree of generality of such dual processes. The simplest path for deriving the propagation step for the nonparametric signals appears to be resorting to the corresponding parametric dual by means of projections and by exploiting the filtering results for those cases.

\section*{Ackowledgements}

The first author acknowledges financial support from the Spanish government  via grant MTM2012-37195.
The second author is supported by the European Research Council (ERC) through StG ``N-BNP'' 306406.


\section*{Appendix}\label{sec: proofs}
\phantomsection\addcontentsline{toc}{section}{Appendix}

\renewcommand{\thesection}{\Alph{section}}
\setcounter{section}{1}
\newtheorem{applemma}{Lemma}
\renewcommand{\theapplemma}{\thesection.\arabic{applemma}}
\setcounter{applemma}{0}

The following Lemma provides the transition probabilities for the death process of Theorem \ref{prop: PR14-WF-infty}.

\begin{applemma}\label{lemma: death transitions probabilities}
Let $M_{t}\subset\Z^{\infty}$ be a death process that starts from $M_{0}=\mm_{0}\in\M$ (see \eqref{set of multiplicities}) and jumps from $\mm$ to $\mm-\ee_{i}$ at rate $m_{i}(\theta+\mm-1)/2$, with generator
\begin{equation*}
\frac{\theta+|\mm|-1}{2}\sum_{i\ge1}m_{i}h(\xx,\mm-\ee_{i})-\frac{|\mm|(\theta+|\mm|-1)}{2}h(\xx,\mm).
\end{equation*}
Then the transition probabilities for $M_{t}$ are
\begin{equation*}
\begin{split}
p_{\mm,\mm}(t)=&\,e^{-\lambda_{|\mm|}},\\
p_{\mm,\mm-\ii }(t)=&\,
C_{|\mm| ,|\mm| -|\ii|}(t) \Bigg(\prod_{h=0}^{|\ii|-1}\lambda_{|\mm| -h}\Bigg)p(\ii;\,\mm,|\ii|),
\quad  \oo< \ii\le \mm,\\
C_{|\mm| ,|\mm| -|\ii|}(t)=&\,
(-1)^{|\ii|}\sum_{k=0}^{|\ii|}\frac{e^{-\lambda_{|\mm| -k}t}}{\prod_{0\le h\le |\ii|,h\ne k}(\lambda_{|\mm| -k}-\lambda_{|\mm| -h})},
\end{split}
\end{equation*}
and 0 otherwise.
\end{applemma}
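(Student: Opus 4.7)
My plan is to decompose the death process $M_t$ into two pieces that are asymptotically independent in a suitable sense: the total mass $|M_t|$, which is itself a pure death process on $\N$, and the ``which-coordinate-died'' selection mechanism, which only depends on the current composition at the jump times.

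First, I would observe that since the total outgoing rate from state $\mm$ is
\[
\sum_{i\ge 1} m_i\,(\theta+|\mm|-1)/2 = |\mm|(\theta+|\mm|-1)/2 = \lambda_{|\mm|},
\]
the chain $\{|M_t|,\,t\ge 0\}$ is itself a one-dimensional pure death process on $\Z_+$ with death rates $\lambda_n = n(\theta+n-1)/2$. The case $\ii=\oo$ is then immediate: $p_{\mm,\mm}(t)=e^{-\lambda_{|\mm|}t}$. For $|\ii|=k\ge 1$, I would derive $p_{|\mm|,|\mm|-k}(t)$ by solving the Kolmogorov forward equations for this one-dimensional death process. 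The standard approach is to write the Laplace transform of $p_{|\mm|,|\mm|-k}$ as a rational function with simple poles at $-\lambda_{|\mm|-h}$ for $h=0,\ldots,k$ (this uses that the $\lambda_n$ are pairwise distinct, which holds because $n\mapsto n(\theta+n-1)/2$ is strictly increasing for $n\ge 1$), and invert it by partial fractions. This yields exactly
\[
p_{|\mm|,|\mm|-k}(t) = \Bigg(\prod_{h=0}^{k-1}\lambda_{|\mm|-h}\Bigg) C_{|\mm|,|\mm|-k}(t),
\]
with $C_{|\mm|,|\mm|-k}(t)$ as in the statement.

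Second, I would handle the selection mechanism. Conditionally on a jump occurring when the chain is in state $\nn$, the probability that coordinate $i$ decreases is $n_i(\theta+|\nn|-1)/2$ divided by $\lambda_{|\nn|}$, i.e.\ $n_i/|\nn|$; this is independent of the jump times, by the standard thinning/competition of independent exponential clocks. Hence, conditioning on the event $\{|M_t|=|\mm|-k\}$, the vector of coordinate-wise losses $\ii$ is distributed as the outcome of $k$ successive draws without replacement from an urn with $m_j$ balls of colour $j$, which is precisely the multivariate hypergeometric distribution $p(\ii;\mm,k)$ given in \eqref{hypergeometric}. I would verify this either by a direct induction on $k$ using the one-step transition probabilities, or simply by exchangeability of the death order and the classical urn argument.

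Third, combining the two components via independence of the trajectory of $|M_t|$ from the choice of which coordinates are removed (at the level of the embedded chain), I obtain
\[
p_{\mm,\mm-\ii}(t) = p_{|\mm|,|\mm|-|\ii|}(t)\,p(\ii;\mm,|\ii|), \qquad \oo<\ii\le \mm,
\]
which is the stated expression. Transitions to states not of the form $\mm-\ii$ with $\oo\le\ii\le\mm$ have probability zero because $M_t$ can only decrease coordinate-wise. The main subtlety, and the only step that requires genuine work, is the partial-fraction inversion leading to $C_{|\mm|,|\mm|-k}(t)$; everything else is a clean application of the competing-exponentials description of continuous-time Markov chains and the urn interpretation of multivariate hypergeometric sampling.
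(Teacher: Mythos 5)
Your proposal is correct and takes essentially the same approach as the paper: the paper's proof of this lemma is a one-line deferral to Proposition 2.1 of \cite{PR14}, whose argument is precisely your decomposition of $M_t$ into the one-dimensional pure death process $|M_t|$ with rates $\lambda_n=n(\theta+n-1)/2$ (whose transition probabilities, obtained by partial-fraction inversion, give the factor $C_{|\mm|,|\mm|-|\ii|}(t)\prod_{h=0}^{|\ii|-1}\lambda_{|\mm|-h}$) together with the independent urn-type selection of dying coordinates yielding the multivariate hypergeometric weight $p(\ii;\mm,|\ii|)$. The only point the paper flags explicitly that you use implicitly is that $|\mm_0|<\infty$ confines the dynamics to finitely many coordinates, so the finite-dimensional argument of \cite{PR14} applies verbatim.
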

\begin{proof}
Since $|\mm_{0}|<\infty$, for any such $\mm_{0}$ the proof is analogous to that of Proposition 2.1 in \cite{PR14}.
\end{proof}

%

The following Lemma recalls the propagation step for one dimensional CIR processes.

\begin{applemma}\label{lemma: 1-CIR propagation}
Let $Z_{i,t}$ be a CIR process with generator \eqref{CIR generator} and invariant distribution $\mathrm{Ga}(\alpha_{i},\beta)$. Then
\begin{equation}\label{CIR propagation}\notag
\psi_{t}\big(\mathrm{Ga}(\alpha_{i}+m,\beta+\ss)\big)
=\sum\nolimits_{j=0}^{m}\mathrm{Bin}(m-j;\,m,p(t))
\mathrm{Ga}(\alpha_{i}+m-j,\beta+\SS_{t}),
\end{equation}
where
\begin{equation}\notag
p(t)=\frac{\beta}{(\beta+\ss)e^{\beta t/2}-\ss},\quad
\SS_{t}=\frac{\beta s}{(s+\beta)e^{\beta t/2}-s}, \quad \SS_{0}=\ss.
\end{equation}
\end{applemma}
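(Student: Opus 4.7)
The plan is to specialise the duality of Theorem~\ref{multi CIR duality} to $K=1$ and then repeat, in the one-dimensional CIR setting, the reversibility-plus-duality argument used to prove Theorem~\ref{prop: PR14-WF-infty}. A short direct computation (multiplying the gamma density by the ratio $h^C_{\alpha_i}(z,m,\ss)$ defined in the proof of Theorem~\ref{multi CIR duality}) yields the key identity
\begin{equation*}
h^C_{\alpha_i}(z,m,\ss)\,\mathrm{Ga}(\alpha_i,\beta)(\d z)=\mathrm{Ga}(\alpha_i+m,\beta+\ss)(\d z),
\end{equation*}
which exhibits the initial measure as a bounded multiple of the invariant law $\mathrm{Ga}(\alpha_i,\beta)$.

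Using this identity together with the reversibility of $Z_{i,t}$ with respect to $\mathrm{Ga}(\alpha_i,\beta)$ and the $K=1$ instance of the duality \eqref{duality identity}, I would write
\begin{align*}
\psi_t(\mathrm{Ga}(\alpha_i+m,\beta+\ss))(\d z')
&=\int h^C_{\alpha_i}(z,m,\ss)\,\mathrm{Ga}(\alpha_i,\beta)(\d z)\,P_t(z,\d z')\\
&=\mathrm{Ga}(\alpha_i,\beta)(\d z')\,\E^{z'}[h^C_{\alpha_i}(Z_{i,t},m,\ss)]\\
&=\mathrm{Ga}(\alpha_i,\beta)(\d z')\,\E^{(m,\ss)}[h^C_{\alpha_i}(z',M_t,\SS_t)],
\end{align*}
where $(M_t,\SS_t)$ is the dual supplied by Theorem~\ref{multi CIR duality} with $K=1$: $\SS_t$ is the deterministic solution of \eqref{ODE}, and, conditionally on $\SS_{\cdot}$, $M_t$ is a pure death chain on $\Z$ with state-dependent rate $m(\beta+\SS_t)/2$.

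To identify the expectation on the right as the claimed binomial mixture, I need the law of $M_t$ given $M_0=m$. Solving \eqref{ODE} gives the closed form of $\SS_t$ displayed in \eqref{bernoulli parameter}, and the telescoping integration-of-intensity argument used at the end of the proof of Proposition~\ref{K-CIR implications} yields
\begin{equation*}
\Pr(M_t=m-j\mid M_0=m)=\mathrm{Bin}(m-j;\,m,p(t)),\qquad j=0,\dots,m,
\end{equation*}
with $p(t)$ as in \eqref{bernoulli parameter}. Substituting back, exchanging sum and integral, and using once more $h^C_{\alpha_i}(\cdot,m-j,\SS_t)\,\mathrm{Ga}(\alpha_i,\beta)=\mathrm{Ga}(\alpha_i+m-j,\beta+\SS_t)$ produces the stated mixture.

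The only real difficulty is the computation of the death-process transition probabilities, since the jump rates depend on the time-varying deterministic $\SS_t$ rather than being constant, and one cannot simply read them off a standard pure-death semigroup. Fortunately, the Riccati-type ODE \eqref{ODE} admits a closed-form solution, and the resulting integrated intensity $\int_0^t(\beta+\SS_u)\,\d u$ telescopes into the binomial form above; this is precisely the calculation spelled out in the proof of Proposition~\ref{K-CIR implications}, so the same argument applies verbatim here.
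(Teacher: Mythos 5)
Your proof is correct, but it takes a genuinely different route from the paper's: the paper proves Lemma \ref{lemma: 1-CIR propagation} by simply citing Section 3.1 of \cite{PR14}, where the one-dimensional CIR filter was already derived, and translating parametrisations ($\alpha=\delta/2$, $\beta=\gamma/\sigma^{2}$, $\SS_{t}=\Theta_{t}-\beta$), whereas you reconstruct the result internally from the paper's own machinery. Your key identity $h^{C}_{\alpha_i}(z,m,\ss)\,\mathrm{Ga}(\alpha_i,\beta)(\d z)=\mathrm{Ga}(\alpha_i+m,\beta+\ss)(\d z)$ is a correct one-line computation, and combined with reversibility of \eqref{CIR generator} with respect to $\mathrm{Ga}(\alpha_{i},\beta)$ and the $K=1$ case of Theorem \ref{multi CIR duality} (where $\theta=\alpha_{1}$ forces $h^{W}\equiv 1$, so $h=h^{C}_{\alpha_1}$), your chain of equalities faithfully mirrors the reversibility-plus-duality argument of Theorem \ref{prop: PR14-WF-infty}. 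Two points deserve explicit care. First, a potential circularity: the paper's proof of Proposition \ref{K-CIR implications} invokes Lemma \ref{lemma: 1-CIR propagation}, so you may not use that proposition's \emph{statement}; but you only borrow the self-contained computation of the death-process law at the end of its proof --- solving \eqref{ODE} gives $\int_0^t(\beta+\SS_u)\,\d u=2\log(\ss/\SS_t)$, hence survival probability $p(t)^{m}$ and $\Pr(M_t=m-j\mid M_0=m)=\mathrm{Bin}(m-j;\,m,p(t))$ --- and that computation nowhere uses the lemma, so there is no vicious circle; it would be worth saying so explicitly. Second, Theorem \ref{multi CIR duality} is established via the local condition \eqref{local duality}, and upgrading this to the semigroup identity \eqref{duality identity} involves the usual integrability justification; you use the theorem as stated, which is consistent with the paper's own level of rigour. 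As for what each approach buys: the paper's citation is economical and avoids duplicating \cite{PR14}, while your derivation makes the appendix self-contained, spares the reader the reparametrisation check against an external source, and exhibits the lemma as the honest $K=1$ specialisation of the paper's multivariate duality theorem.
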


\begin{proof}
It follows from Section 3.1 in \cite{PR14} by letting $\alpha=\delta/2$,  $\beta=\gamma/\sigma^{2}$ and $\SS_{t}=\Theta_{t}-\beta$.
\end{proof}


\end{document}